\newtheorem{example}{Example}[section]
\crefname{hypothesis}{Hypothesis}{Hypotheses}
\title{QQMR: A Structure Preserving Quaternion Quasi-Minimal Residual Method for Non-Hermitian Quaternion Linear Systems\thanks{Corresponding author: Q.W. Wang (wqw@t.shu.edu.cn).
\funding{This work was funded by National Natural Science Foundation of China [grant numbers 12371023 and 12171369], Hainan Provincial Natural Science Foundation of China [grant numbers
122QN214 and 122MS001], and the Academic Programs project of Hainan University [grant number KYQD(ZR)-21119].}}}
\author{Tao Li\thanks{School of Mathematics and Statistics, Hainan University, Haikou 570228, P. R. China
  (\email{tli@hainanu.edu.cn}).}
\and Qing-Wen Wang\thanks{Department of Mathematics and Newtouch Center for Mathematics, and Collaborative Innovation Center for the Marine Artificial Intelligence, Shanghai University, Shanghai 200444, P. R. China
  (\email{wqw@t.shu.edu.cn}).}
\and Xin-Fang Zhang\thanks{School of Mathematics and Statistics, Hainan University, Haikou 570228, P. R. China
  (\email{995272@hainanu.edu.cn}).}}
\begin{document}

\maketitle

\begin{abstract}
 The quaternion biconjugate gradient (QBiCG) method, as a novel variant of quaternion Lanczos-type methods for solving the non-Hermitian quaternion linear systems, does not yield a minimization property. This means that the method possesses a rather irregular convergence behavior, which leads to numerical instability. In this paper, we propose a new structure-preserving quaternion quasi-minimal residual method, based on the quaternion biconjugate orthonormalization procedure with coupled two-term recurrences, which overcomes the drawback of QBiCG. The computational cost and storage required by the proposed method are much less than the traditional QMR iterations for the real representation of quaternion linear systems. Some convergence properties of which are also established. Finally, we report the numerical results to show the robustness and effectiveness of the proposed method compared with QBiCG.
\end{abstract}

\begin{keywords}
Quaternion linear systems; Coupled two-term recurrences; Quaternion biconjugate orthonormalization procedure; Quaternion quasi-minimal residual method
\end{keywords}

\begin{MSCcodes}
15B33; 65F08; 65F10; 94A08
\end{MSCcodes}
\section{Introduction}\label{section-1}
Quaternions \cite{Hamilton}, invented by Sir W.R. Hamilton in 1843, and quaternion matrices \cite{Zhang1, Rodman, Qi} as powerful tools, have been widely used in orbital mechanics \cite{Jiang}, quantum mechanics \cite{Davies},  color image processing \cite{Subakan1, Jia0, Jia1, Chen0, Song1, Zhang11} and computer graphics \cite{Pletinckx, Guan1, Guan3}, etc.
The fundamental mathematical model among most of these areas can be formulated as the quaternion linear systems with quaternion vectors as desired variables, such as the Lorenz attractor model used in atmospheric turbulence \cite{Strogatz}, the color image pixel prediction model proposed in \cite{Jia4}, etc. All of these motivate us to consider high-performance algorithms for solving quaternion linear systems without dimension expanding.

Compared to the traditional Krylov subspace methods for solving the real or complex representations of quaternion linear systems, the structure-preserving quaternion Krylov subspace methods are far from abundant. The main difficulty comes from the non-commutability of multiplication for quaternions.
To overcome this issue, the traditional Krylov subspace methods are usually based on the real/complex representations, whose dimension is expanded to four/two times the original dimension. Obviously, the computational work and storage requirements of these methods are very costly. In contrast, the structure-preserving methods inheriting the algebraic symmetry of real representations do not need to expand the dimension. The latter can save three-quarters of the theoretical costs, which is more competitive. For more detailed implementations, see \cite{Jia2, Jia3, Li4, Ling1, Jia5, Jia6} and the references therein.

Let $\mathbb{R}$ be the real number field, and let $\mathbb{Q}$ be the quaternion skew-field of the form 
$$\mathbb{Q}=\{x_0+x_1\mathbf{i}+x_2\mathbf{j}+x_3\mathbf{k}|\mathbf{i}^2=\mathbf{j}^2=\mathbf{k}^2=\mathbf{i}\mathbf{j}\mathbf{k}=-1,x_0,x_1,x_2,x_3\in\mathbb{R}\}$$
where $\mathbf{i},\mathbf{j},\mathbf{k}$ are imaginary units. Denote by $\mathbb{Q}^{m\times n}$ and $\mathbb{Q}^{n}$, the collections of  all $m\times n$ matrices and $n$-dimensional vectors over $\mathbb{Q}$, respectively. In this paper, we consider the following non-Hermitian quaternion linear systems
\begin{equation}\label{1-1}
\mathbf{A}\mathbf{x}=\mathbf{b},
\end{equation}
where the invertible matrix $\mathbf{A}\in \mathbb{Q}^{n\times n}$ and the quaternion vector $\mathbf{b}\in \mathbb{Q}^{n}$ are given, $\mathbf{x}\in \mathbb{Q}^{n}$ is unknown required to be determined. Currently, only a few structure-preserving quaternion Krylov subspace methods and theoretical results for solving Eq.\eqref{1-1} have been elegantly developed. For example, Jia and Ng \cite{Jia4} derived the structure-preserving quaternion generalized minimal residual (QGMRES) method for solving the quaternion linear systems, which is superior to the block GMRES method \cite{Kubínová} over the complex field. Li and Wang \cite{Li0} formulated the quaternion full orthogonalization (QFOM) method and its preconditioned variant for obtaining the iterative solutions of Eq.\eqref{1-1}. Both are based on the quaternion Arnoldi procedure, which preserves the quaternion Hessenberg form while iterating. Besides, the quaternion biconjugate gradient (QBiCG) method \cite{Li1}, which relies on the quaternion Lanczos biorthogonalization procedure preserving the quaternion tridiagonal form while iterating, is proposed. The storage requirements of QBiCG, based on three-term recurrences, are lower than those of QGMRES and QFOM, such that it has better performance in some cases. However, the QBiCG iterates do not yield a minimization property in residual, which means that the method may often show an erratic convergence behavior with wild oscillations in the residual norm.

As well-known that the classical quasi-minimal residual (QMR) method
 \cite{Freund1, Freund2, Freund3, Freund4, Zhang} is an efficient and robust Krylov subspace method for solving general linear systems over the complex field. It can eliminate the numerical instability of the traditional 
 BiCG method. The iterates of the QMR method satisfy a quasi-minimization of the residual norm, rather than a Galerkin condition, which leads to a smooth and nearly monotone convergence
behavior. Moreover, in exact
arithmetic, it was shown in \cite{Freund2} that the QMR method developed by means of coupled two-term recursions is more robust than the equivalent three-term recurrences version. Hence, we propose a structure-preserving quaternion quasi-minimal residual (QQMR) method, which is based on the quaternion biconjugate orthonormalization process by means of coupled two-term recurrences, for solving the original linear systems \eqref{1-1}. The QQMR method depending on the stability of quaternion operations and the rapidity of real computations, only performs real operations without dimension expanding. The resulting tridiagonal quaternion least-squares problem is solved by the quaternion QR factorization with the generalized quaternion Givens rotations. Additionally, we will analyze the convergence for the proposed method and give some properties. Specifically, the method presented here differs from the one discussed in \cite{Freund1, Freund2} in the following aspects:
\begin{itemize}
    \item The proposed quaternion biconjugate orthonormalization (QBIO) process is entirely distinguished from the traditional two-sides Lanczos process \cite{Lanczos0}, since the QBIO process preserves the quaternion tridiagonal form during the iterations, but the latter cannot do it. The established process builds a pair of biconjugate orthonormal bases for the two quaternion subspaces
    \begin{equation}\label{1-2}
    \begin{aligned}
    \mathcal{K}_m(\mathbf{A},\mathbf{v}):&=\text{span}\{\mathbf{v},\mathbf{A}\mathbf{v},\cdots,\mathbf{A}^{m-1}\mathbf{v}\},\\
 \mathcal{K}_m(\mathbf{A}^*,\mathbf{w}):&=\text{span}\{\mathbf{w},\mathbf{A}^*\mathbf{w},\cdots,(\mathbf{A}^*)^{m-1}\mathbf{w}\}.\\
    \end{aligned}
    \end{equation}
    This differs from the quaternion Lanczos biorthogonalization procedure \cite{Li1}.
    \item Compared with the classical QMR method, the QQMR solver built upon the structure-preserving strategy significantly saves storage requirements and computational operations. Moreover, the developed method, as a high performance method over $\mathbb{Q}$, has better convergence properties than that of QBiCG, resulting in smoother convergence behavior.
\end{itemize}

The rest of this paper is as follows. In Section \ref{section-2}, we briefly recall some significant results associated with quaternions and quaternion matrices. In Section \ref{section-3}, we introduce the quaternion biconjugate orthonormalization procedure based on the three-term recurrences, which preserves the quaternion tridiagonal form during the iterations, and then develop a more robust version by means of the coupled two-term recursions. In Section \ref{section-4}, we develop two equivalently QQMR methods and their preconditioned variants, as well as present their convergence properties. Section \ref{section-5} is devoted to some numerical experiments with practical applications to illustrate the effectiveness and robustness of the QQMR method in comparison with QBiCG. Finally, we conclude this paper by giving some remarks in Section \ref{section-6}.



\section{Preliminaries}\label{section-2}
Throughout this paper, the vectors and matrices, unless otherwise stated, are assumed to be quaternions. For any quaternion $\mathbf{q}=q_0+q_1\mathbf{i}+q_2\mathbf{j}+q_3\mathbf{k}$, the conjugate of $\mathbf{x}$ is denoted by $\mathbf{q}^*=q_0-q_1\mathbf{i}-q_2\mathbf{j}-q_3\mathbf{k}$, and its magnitude is of the form $|\mathbf{q}|=\sqrt{\mathbf{q}^*\mathbf{q}}=\sqrt{q_{0}^2+q_{1}^2+q_{2}^2+q_{3}^2}\in \mathbb{R}$.
The inverse of nonzero quaternion $\mathbf{q}$, defined by $\mathbf{q}^{-1}=\mathbf{q}^*/{|\mathbf{q}|^2}$, is unique, and its conjugate is of the form $\mathbf{q}^{-*}=\mathbf{q}/{|\mathbf{q}|^2}$.  The inner product of two quaternion vectors $\mathbf{x}=(\mathbf{x}_i),\ \mathbf{y}=(\mathbf{y}_i)\in \mathbb{Q}^n$ is a quaternion scalar defined by
$$\langle\mathbf{x},\mathbf{y} \rangle:= \sum_{i=1}^n\mathbf{y}_i^*\mathbf{x}_i.$$
The induced 2-norm of the quaternion vector $\mathbf{x}$ is of the form
$$\|\mathbf{x}\|_2:=\sqrt{\sum_{i=1}^n|\mathbf{x}_i|^2}.$$
Then the 2-norm of the quaternion matrix $\mathbf{M}\in \mathbb{Q}^{m\times n}$ is defined as
$$
\|\mathbf{M}\|_2:=\max_{\mathbf{x}\in\mathbb{Q}^{n}/\{\mathbf{0}\}}\frac{\|\mathbf{M}\mathbf{x}\|_2}{\|\mathbf{x}\|_2}.
$$
 Denote by $\mathbf{v}_1\mathbf{\bm{\alpha}}_1+\mathbf{v}_2\mathbf{\bm{\alpha}}_2+\cdots+\mathbf{v}_n\mathbf{\bm{\alpha}}_n$, the right-hand-side linear combination of quaternion vectors $\mathbf{v}_1,\mathbf{v}_2,\cdots,\mathbf{v}_n$, in which ${\bm{\alpha}}_i$ are quaternion scalars ($i=1,2,\cdots,n$). If there exist quaternion scalars ${\bm{\alpha}}_1,\cdots,{\bm{\alpha}}_n$, not all zero, such that $\mathbf{v}_1\mathbf{\bm{\alpha}}_1+\mathbf{v}_2\mathbf{\bm{\alpha}}_2+\cdots+\mathbf{v}_n\mathbf{\bm{\alpha}}_n=\mathbf{0}$, then they are linearly dependent; in contrast, linearly independent. In what follows, we only consider the right multiplication over $\mathbb{Q}$ since it has similar properties to the unitary complex vector space. From \cite{Ghiloni}, one can see that $\mathbb{Q}^{n}$ is a right quaternionic Hilbert space with the inner product satisfying the following properties:
 \begin{itemize}
\item (Right linearity) $\langle\mathbf{x}\mathbf{\bm{\alpha}}+\mathbf{y}\mathbf{\bm{\beta}},\mathbf{z} \rangle=\langle\mathbf{x},\mathbf{z} \rangle\bm{\alpha}+\langle\mathbf{y},\mathbf{z} \rangle\bm{\beta}$ holds for $\mathbf{x},\mathbf{y},\mathbf{z}\in \mathbb{Q}^{n}$ and $\bm{\alpha},\bm{\beta}\in \mathbb{Q}$;
\item (Quaternionic hermiticity) $\langle\mathbf{x},\mathbf{y} \rangle=\langle\mathbf{y},\mathbf{x}\rangle^*$ holds for $\mathbf{x},\mathbf{y}\in \mathbb{Q}^{n}$;
\item (Positivity) $\langle\mathbf{x},\mathbf{x} \rangle\geq 0$ holds for $\mathbf{x} \in \mathbb{Q}^{n}$, and $\mathbf{x}=\mathbf{0}$ if and only if $\langle\mathbf{x},\mathbf{x} \rangle=0;$
\item If $\mathbf{x},\mathbf{y}\in \mathbb{Q}^{n}$, then their distance is defined as $d(\mathbf{x},\mathbf{y}):=\sqrt{\langle\mathbf{x}-\mathbf{y},\mathbf{x} -\mathbf{y}\rangle}$.
\end{itemize}
A quaternion matrix $\mathbf{X}\in\mathbb{Q}^{m\times n}$ is of the form $\mathbf{X}=X_{0}+X_{1}\mathbf{i}+X_{2}\mathbf{j}+X_{3}\mathbf{k}$, in which $X_{0}, X_{1}, X_{2}, X_{3}\in \mathbb{R}^{m\times n}$. Denote by $\mathbf{X}^*=X_{0}^{T}-X_{1}^{T}\mathbf{i}-X_{2}^{T}\mathbf{j}-X_{3}^{T}\mathbf{k}$,
 the conjugate transpose of $\mathbf{X}$, where the superscript $^T$ stands for the transpose of a real matrix. We say that the columns of a quaternion matrix  $\mathbf{V}$ are orthogonal to each other if $\mathbf{V}^*\mathbf{V}=\mathbf{I}$. 

Let $\mathbf{M}=M_0+M_1\mathbf{i}+M_2\mathbf{j}+M_3\mathbf{k}$ be an $m\times n$ quaternion matrix. From the linear homeomorphic mapping $\mathcal{R}(\cdot)$, the real counterpart of $\mathbf{M}$ is of the form
\begin{equation}\label{2-1}
\mathcal{R}(\mathbf{M})=\begin{bmatrix}
M_0&-M_1&-M_2&-M_3\\
M_1&M_0&-M_3&M_2\\
M_2&M_3&M_0&-M_1\\
M_3&-M_2&M_1&M_0\\
\end{bmatrix}\in \mathbb{R}^{4m\times 4n},
\end{equation}
and its first block column is denoted by
$
\mathcal{R}(\mathbf{M})_{c}=\begin{bmatrix}
M_0^T&M_1^T&M_2^T&M_3
\end{bmatrix}^T\in \mathbb{R}^{4m\times n}.
$
As stated in \cite{Jia3, Jia6}, one can see that the real counterpart of each quaternion matrix is surely a JRS-symmetric matrix. If $M, H, Q$ are JRS-symmetric matrices, then the matrix $\alpha_1 M+\alpha_2 HQ$ so is, where $\alpha_1, \alpha_2\in\mathbb{R}$. Denote the inverse mapping of $\mathcal{R}$ to the JRS-symmetric matrix by $\mathcal{R}^{-1}(\mathcal{R}(\mathbf{M}))=\mathbf{M}.$

The quaternion Krylov subspace generated by the coefficient matrix $\mathbf{A}\in \mathbb{Q}^{n\times n}$ and a nonzero vector $\mathbf{v}\in \mathbb{Q}^{n}$ is as follows
$$\mathcal{K}_m(\mathbf{A},\mathbf{v}):=\text{span}\{\mathbf{v},\mathbf{A}\mathbf{v},\cdots,\mathbf{A}^{m-1}\mathbf{v}\}$$
with $m\geq 1.$ Each vector in $\mathcal{K}_m(\mathbf{A},\mathbf{v})$ is a right-hand-side linear combination of $\mathbf{v},\mathbf{A}\mathbf{v},\cdots,\mathbf{A}^{m-1}\mathbf{v}$, but it cannot be rewritten as the multiplication of a polynomial with degree less than $m-1$ with the matrix 
$\mathbf{A}$ and the vector $\mathbf{v}$ because of the non-commutativity of the quaternion multiplication. For the sake of convenience, we denote the right-hand linear combination of $\mathbf{v},\mathbf{A}\mathbf{v},\cdots,\mathbf{A}^{m-1}\mathbf{v}$ by
\begin{equation}\label{2-2}
\mathcal{L}_m(\mathbf{A},\mathbf{v}):=\mathbf{v}\bm{\alpha}_0+\mathbf{A}\mathbf{v}\bm{\alpha}_1+\cdots+\mathbf{A}^{m-1}\mathbf{v}\bm{\alpha}_{m-1},
\end{equation}
where $\bm{\alpha}_0,\bm{\alpha}_1,\cdots,\bm{\alpha}_{m-1}\in\mathbb{Q}$. Obviously, the dimension of $\mathcal{K}_m(\mathbf{A},\mathbf{v})$ is the rank of the matrix $\mathbf{V}:=[\mathbf{v},\mathbf{Av},\cdots,\mathbf{A}^{m-1}\mathbf{v}]$. 

\section{Quaternion biconjugate orthonormalization method}\label{section-3}

In this section, we first formulate the quaternion biconjugate orthonormalization (QBIO) procedure employing three-term recursions, and then derive its equivalently robust version with coupled two-term recurrences.  Next, we recall some helpful results which will be used in the sequel. 

\begin{definition}\label{def-1}\cite{Li1} 
Let ${M}\in\mathbb{R}^{4n\times 4n}$ be a JRS-symmetric matrix having the form of \eqref{2-1}. Then it is said to be an upper JRS-tridiagonal matrix if $M_0\in\mathbb{R}^{n\times n}$ and $M_1, M_2,M_3 \in \mathbb{R}^{n\times n}$ are tridiagonal matrix and upper bidiagonal matrices, respectively. 
\end{definition}

\begin{theorem}\label{theorem-2}\cite{Li1} 
Let $M$ be an $4n\times 4n$ JRS-symmetric matrix having the form of \eqref{2-1}. Then there exist orthogonally JRS-symplectic matrices $V, W\in\mathbb{R}^{4n\times 4n}$, such that \begin{equation}\label{3-0}
W^TMV=H
\end{equation}
is a JRS-tridiagonal matrix.
\end{theorem}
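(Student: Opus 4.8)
The plan is to prove the quaternion form of the statement and then transport it back through the isomorphism $\mathcal{R}$. Recall that $M=\mathcal{R}(\mathbf{M})$ for some $\mathbf{M}\in\mathbb{Q}^{n\times n}$, that $\mathcal{R}$ is a unital real-algebra isomorphism onto the JRS-symmetric matrices (so products and real-linear combinations of JRS-symmetric matrices stay JRS-symmetric), and that a matrix is orthogonally JRS-symplectic precisely when it equals $\mathcal{R}(\mathbf{U})$ for a unitary $\mathbf{U}\in\mathbb{Q}^{n\times n}$. Since a direct computation gives $\mathcal{R}(\mathbf{W})^{T}=\mathcal{R}(\mathbf{W}^{*})$, we have $W^{T}MV=\mathcal{R}(\mathbf{W}^{*}\mathbf{M}\mathbf{V})$, so \eqref{3-0} is equivalent to exhibiting unitary $\mathbf{W},\mathbf{V}\in\mathbb{Q}^{n\times n}$ for which $\mathbf{H}:=\mathbf{W}^{*}\mathbf{M}\mathbf{V}=H_{0}+H_{1}\mathbf{i}+H_{2}\mathbf{j}+H_{3}\mathbf{k}$ has $H_{0}$ real tridiagonal and $H_{1},H_{2},H_{3}$ real upper bidiagonal; equivalently, $\mathbf{H}$ is quaternion tridiagonal with a purely real subdiagonal. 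I would prove this quaternion statement and read off the JRS-tridiagonal conclusion of the form \eqref{2-1} through $\mathcal{R}$.

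For the quaternion statement I would give a constructive, two-sided reduction, building $\mathbf{W}$ and $\mathbf{V}$ as products of quaternion Householder reflections and generalized Givens rotations, by induction on $k=1,\dots,n-2$, in analogy with the orthogonal reduction of a general matrix to tridiagonal form by alternating left/right transformations. At step $k$ I would first apply a left Householder reflection $\mathbf{W}_{k}$ acting on rows $k+1,\dots,n$ to annihilate the entries of column $k$ in rows $k+2,\dots,n$, leaving a single subdiagonal entry at $(k+1,k)$, and then a generalized Givens rotation multiplying that entry by a suitable unit quaternion so that it becomes real. Next I would apply a right Householder reflection $\mathbf{V}_{k}$ acting on columns $k+1,\dots,n$ to annihilate the entries of row $k$ in columns $k+2,\dots,n$, leaving a single superdiagonal entry at $(k,k+1)$. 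A short bookkeeping argument shows no previously created zero is destroyed: $\mathbf{W}_{k}$ mixes only rows $\ge k+1$, where the finalized columns $1,\dots,k-1$ already vanish, and fixes rows $1,\dots,k$; symmetrically $\mathbf{V}_{k}$ mixes only columns $\ge k+1$ and fixes columns $1,\dots,k$. After $n-2$ steps the matrix is quaternion tridiagonal with real subdiagonal, and since each elementary factor is $\mathcal{R}$ of a unitary, the accumulated $W,V$ are orthogonally JRS-symplectic and JRS-symmetry is preserved throughout, giving \eqref{3-0}.

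The hard part will be the non-commutativity of $\mathbb{Q}$. I must re-derive a quaternion Householder reflection that sends a prescribed quaternion vector to a right-multiple of $\mathbf{e}_{1}$ while respecting the right-linearity of $\langle\cdot,\cdot\rangle$, and then show that the surviving leading entry can be rotated to a real number by a unit-quaternion factor without reintroducing nonzeros. This real-subdiagonal step is the crux, because it is exactly what forces $H_{1},H_{2},H_{3}$ to be upper bidiagonal rather than tridiagonal while $H_{0}$ remains tridiagonal: operating on the single quaternion matrix $\mathbf{H}$ coordinates all four real blocks simultaneously, since annihilating (respectively, realifying) a quaternion entry annihilates (respectively, realifies) the corresponding entries of $M_{0},M_{1},M_{2},M_{3}$ at once. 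The remaining obligation is the routine but essential check that each Householder and Givens factor is genuinely $\mathcal{R}$ of a quaternion unitary, so that the JRS-symmetric structure \eqref{2-1} is retained at every stage and the terminal matrix $H$ is JRS-symmetric of the claimed tridiagonal type.
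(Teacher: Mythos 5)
You should first be aware that this paper contains no proof of this theorem to compare against: the statement is quoted, with citation, from \cite{Li1}, and the present manuscript simply imports it. Judged against the cited source, your plan is sound and is in substance the same constructive two-sided unitary reduction; the difference is where the computation lives. You work at the quaternion level --- annihilate the tail of column $k$ (rows $k+2,\dots,n$) with a left quaternion Householder, realify the surviving subdiagonal pivot $(k+1,k)$ with a unit-quaternion scaling, annihilate the tail of row $k$ with a right quaternion Householder --- and transport the result through $\mathcal{R}$ at the end, using $\mathcal{R}(\mathbf{W})^{T}=\mathcal{R}(\mathbf{W}^{*})$ and the identification of orthogonally JRS-symplectic matrices with real counterparts of quaternion unitaries. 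The structure-preserving literature this result rests on (see \cite{Jia2,Jia3,Li1}) instead operates directly on the $4n\times 4n$ real counterpart with two primitives: generalized symplectic Givens rotations, which are $\mathcal{R}$ of a diagonal unitary with a single unit-quaternion entry and serve to realify entries, and quadruple Householder matrices $H\oplus H\oplus H\oplus H$, which are $\mathcal{R}$ of a real Householder and can annihilate only entries that have already been realified. So the order of operations is reversed relative to yours: realify-then-annihilate entrywise there, versus annihilate-then-realify-one-pivot in your version. Through the isomorphism the two are equivalent; yours is the cleaner algebraic statement, while the real-counterpart formulation is what yields the all-real-arithmetic algorithm the paper actually exploits. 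Your observation that realifying only the subdiagonal is precisely what makes $H_0$ tridiagonal while forcing $H_1,H_2,H_3$ to be upper bidiagonal (per the paper's definition of JRS-tridiagonal) is the correct crux.

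Two routine debts remain in your sketch and should be discharged explicitly. First, you must actually exhibit the quaternion Householder $\mathbf{I}-2\mathbf{u}\mathbf{u}^{*}$ sending a prescribed vector to $\mathbf{e}_{1}\bm{\alpha}$ under the right-linear inner product, with the phase of $\bm{\alpha}$ handled on the correct side (this is standard but order-sensitive over $\mathbb{Q}$; see \cite{Jia3}). Second, your induction over $k=1,\dots,n-2$ realifies the subdiagonal entries $(2,1),\dots,(n-1,n-2)$ but leaves the last entry $(n,n-1)$ a general quaternion; one final unit-quaternion scaling of row $n$ is needed, since otherwise $H_1,H_2,H_3$ fail to be upper bidiagonal in exactly that position. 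Neither point threatens the argument.
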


\begin{theorem}\label{theorem-3}\cite{Li1} 
Let $\mathbf{A}=A_0+A_0\mathbf{i}+A_1\mathbf{j}+A_2\mathbf{k}$ be an $n\times n$ quaternion matrix, and let $m$ be a positive integer not greater than $n$. Then there exist two quaternion matrices $\mathbf{W},\mathbf{V}\in \mathbb{Q}^{n\times m}$ with unitary columns such that
\begin{equation}\label{2-3}
\mathbf{W}^*\mathbf{A}\mathbf{V}=\mathbf{H}
\end{equation}
is of tridiagonal form whose subdiagonal elements are nonnegative real numbers, or called a tridiagonal quaternion matrix.
\end{theorem}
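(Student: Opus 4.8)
The plan is to lift the real structural result of Theorem~\ref{theorem-2} to the quaternion level through the mapping $\mathcal{R}(\cdot)$, rather than building the two bases by hand. First I would form the real counterpart $\mathcal{R}(\mathbf{A})\in\mathbb{R}^{4n\times 4n}$, which is JRS-symmetric by the discussion following \eqref{2-1}. Applying Theorem~\ref{theorem-2} to $M=\mathcal{R}(\mathbf{A})$ produces orthogonally JRS-symplectic matrices $V,W\in\mathbb{R}^{4n\times 4n}$ with $W^TMV=H$ JRS-tridiagonal, and the whole task reduces to recognizing each real object as the image under $\mathcal{R}$ of the desired quaternion object.

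The core of the argument is therefore to verify three structural correspondences. (i) An orthogonally JRS-symplectic matrix is exactly the real counterpart of a unitary quaternion matrix, so $V=\mathcal{R}(\mathbf{V})$ and $W=\mathcal{R}(\mathbf{W})$ with $\mathbf{V},\mathbf{W}\in\mathbb{Q}^{n\times n}$ satisfying $\mathbf{V}^*\mathbf{V}=\mathbf{W}^*\mathbf{W}=\mathbf{I}$. (ii) The mapping $\mathcal{R}$ is multiplicative and intertwines conjugate transpose with real transpose, i.e. $\mathcal{R}(\mathbf{W}^*)=\mathcal{R}(\mathbf{W})^T=W^T$, so that $W^TMV=\mathcal{R}(\mathbf{W}^*)\mathcal{R}(\mathbf{A})\mathcal{R}(\mathbf{V})=\mathcal{R}(\mathbf{W}^*\mathbf{A}\mathbf{V})$. (iii) A JRS-tridiagonal matrix in the sense of Definition~\ref{def-1} is precisely $\mathcal{R}(\mathbf{H})$ for a quaternion $\mathbf{H}=H_0+H_1\mathbf{i}+H_2\mathbf{j}+H_3\mathbf{k}$ whose real part $H_0$ is tridiagonal while $H_1,H_2,H_3$ are upper bidiagonal; since only $H_0$ contributes below the diagonal, $\mathbf{H}$ is quaternion tridiagonal with real subdiagonal. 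Applying $\mathcal{R}^{-1}$ to $H=\mathcal{R}(\mathbf{W}^*\mathbf{A}\mathbf{V})$ then yields $\mathbf{W}^*\mathbf{A}\mathbf{V}=\mathbf{H}$, as required.

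Two finishing touches remain. To obtain the rectangular case $m\le n$, I would keep only the first $m$ columns of $\mathbf{V}$ and $\mathbf{W}$; these columns stay orthonormal, and since $(\mathbf{W}^*\mathbf{A}\mathbf{V})_{ij}=\mathbf{w}_i^*\mathbf{A}\mathbf{v}_j$, the truncated product equals the leading $m\times m$ principal block of $\mathbf{H}$, which is again tridiagonal. To force the subdiagonal entries to be nonnegative, I would absorb signs into the basis vectors: replacing $\mathbf{v}_j$ and $\mathbf{w}_j$ by $\pm\mathbf{v}_j$ and $\pm\mathbf{w}_j$ flips the sign of the corresponding real subdiagonal entry without disturbing orthonormality or the tridiagonal pattern.

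The step I expect to be the main obstacle is correspondence (i) together with the sign normalization, namely pinning down that the orthogonal matrices delivered by Theorem~\ref{theorem-2} carry exactly the JRS-symplectic block pattern of $\mathcal{R}(\cdot)$ (so that $\mathcal{R}^{-1}$ applies to them at all), and that the sign fixing can be performed consistently on both sides without breaking the factorization. An alternative, more algorithmic route would be to prove the statement constructively via a two-sided Lanczos-type recurrence that generates $\mathbf{v}_j,\mathbf{w}_j$ while enforcing $\mathbf{V}^*\mathbf{V}=\mathbf{W}^*\mathbf{W}=\mathbf{I}$, with the tridiagonal structure emerging from the short recurrence; there, however, one would additionally have to handle possible breakdowns, which the lifting argument sidesteps entirely.
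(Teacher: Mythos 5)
Your proposal is correct and follows essentially the same route as the paper, which takes the statement from \cite{Li1} where it is obtained exactly as you describe: by pushing the real JRS-tridiagonalization of Theorem \ref{theorem-2} back through the isomorphism $\mathcal{R}^{-1}$, using the correspondences between orthogonally JRS-symplectic matrices and real counterparts of unitary quaternion matrices, and between upper JRS-tridiagonal matrices (Definition \ref{def-1}) and quaternion tridiagonal matrices with real subdiagonal. Your two finishing touches are also sound and close the only gaps between Theorem \ref{theorem-2} and the stated form: truncation to the first $m$ columns yields the leading $m\times m$ tridiagonal block since $(\mathbf{W}^*\mathbf{A}\mathbf{V})_{ij}=\mathbf{w}_i^*\mathbf{A}\mathbf{v}_j$, and the sign normalization can be done consistently by fixing the $\mathbf{v}_j$ and recursively choosing the sign of each $\mathbf{w}_{j+1}$ so that $\mathbf{w}_{j+1}^*\mathbf{A}\mathbf{v}_j\ge 0$, each sign entering exactly one subdiagonal constraint.
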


Remarkably, we call that the decomposition in \eqref{3-0} is structure-preserving, since the matrix $H$ inherits the JRS-symmetry of the real counterpart $M$. The result also holds for different real counterparts of $\mathbf{M}$ because they are permutationally equivalent. Moreover, we only store the first block column of the real counterparts of quaternion matrices, not the explicit real counterparts, which saves storage and computational costs.

Given two initial nonzero quaternion vectors $\mathbf{v}_1$ and $\mathbf{w}_1$, the quaternion Lanczos biorthogonalization procedure \cite{Li1} for non-Hermitian quaternion matrices, see Algorithm \ref{algorithm-1}, produces two sequences of quaternion vectors $\mathbf{v}_1, \mathbf{v}_2,\cdots, \mathbf{v}_m $ and $\mathbf{w}_1, \mathbf{w}_2,\cdots, \mathbf{w}_m$, $m=1,2,\cdots$, such that 
    \begin{equation}\label{3-1}
    \begin{aligned}
    \mathcal{K}_m(\mathbf{A},\mathbf{v}_1):&=\text{span}\{\mathbf{v}_1, \mathbf{v}_2,\cdots, \mathbf{v}_m\},\\
 \mathcal{K}_m(\mathbf{A}^*,\mathbf{w}_1):&=\text{span}\{\mathbf{w}_1, \mathbf{w}_2,\cdots, \mathbf{w}_m\},\\
    \end{aligned}
    \end{equation}
 and 
\begin{equation}\label{3-2}
\langle\mathbf{v}_i,\mathbf{w}_j\rangle:=\left\{\begin{aligned}
&1\quad\quad \mbox{if}\quad i=j,\\
&0\quad\quad \mbox{if}\quad i\neq j.
\end{aligned}
\right.
\end{equation}
\begin{algorithm}[htb]
	\caption{ Quaternion Lanczos biorthogonalization procedure \cite{Li1}}\label{algorithm-1}
	\begin{algorithmic}[1]
		 \STATE  Choose two quaternion vectors $\mathbf{v}_1, \mathbf{w}_1$ such that $\langle\mathbf{v}_1,\mathbf{w}_1\rangle=1$. Set $\bm{\beta}_1=\bm{\sigma}_1=0,\mathbf{v}_0=\mathbf{w}_0=\mathbf{0}$.\\
		\STATE For $j=1,2,\cdots,m$ Do:\\
\STATE   $\bm{\alpha}_{j}=\langle \mathbf{A}\mathbf{v}_j,\mathbf{w}_j\rangle$\\
	\STATE $\overline{\mathbf{v}}_{j+1}:=\mathbf{A}\mathbf{v}_j-\mathbf{v}_{j}\bm{\alpha}_j-\mathbf{v}_{j-1}\bm{\beta}_j$\\
		\STATE $\overline{\mathbf{w}}_{j+1}:=\mathbf{A}^*\mathbf{w}_j-\mathbf{w}_{j}\bm{\alpha_j^*}-\mathbf{w}_{j-1}\bm{\sigma_j}$\\
\STATE $\bm{\sigma}_{j+1}=|\langle\overline{\mathbf{v}}_{j+1},\overline{\mathbf{w}}_{j+1}\rangle|^{\frac{1}{2}}$. If $\bm{\sigma}_{j+1}=0$ Stop\\
\STATE $\bm{\beta}_{j+1}=\langle\overline{\mathbf{v}}_{j+1},\overline{\mathbf{w}}_{j+1}\rangle\bm{\sigma}_{j+1}^{-1}$\\
		\STATE   $\mathbf{w}_{j+1}:=\overline{\mathbf{w}}_{j+1}\bm{\beta_{j+1}^{-*}}$\\
		\STATE $\mathbf{v}_{j+1}:=\overline{\mathbf{v}}_{j+1}\bm{\sigma}_{j+1}^{-1}$\\
	\end{algorithmic}
\end{algorithm}
Note that the coefficients $\bm{\sigma}_{j+1}$ and $\bm{\beta}_{j+1}$ in Algorithm \ref{algorithm-1} are scaling factors for the vectors $\mathbf{v}_{j+1}$ and $\mathbf{w}_{j+1}$,
and yield
\begin{equation}\label{3-3}
\bm{\sigma}_{j+1}\bm{\beta}_{j+1}:=\langle\overline{\mathbf{v}}_{j+1},\overline{\mathbf{w}}_{j+1}\rangle.
\end{equation}
However, both sequences $\{\mathbf{v}_{j+1}\}$ and  $\{\mathbf{w}_{j+1}\}$ may suffer from overflow or underflow, such that the approximate solution is far away from the exact solution. To prevent this drawback, in practical implementations, we follow the choices in the Lanczos process \cite{Lanczos0} that the unit quaternion vectors $\mathbf{v}_{j+1}$ and $\mathbf{w}_{j+1}$ are obtained by setting $\bm{\sigma}_{j+1}=\|\overline{\mathbf{v}}_{j+1}\|$ and $\bm{\beta}_{j+1}=\|\overline{\mathbf{w}}_{j+1}\|$, respectively, but they do not satisfy \eqref{3-3}. This gives the following algorithm, called the quaternion three-term biconjugate orthonormalization procedure, see Algorithm \ref{algorithm-2}.

\begin{algorithm}[htb]
	\caption{ Quaternion three-term biconjugate orthonormalization procedure}\label{algorithm-2}
	\begin{algorithmic}[1]
		 \STATE  Choose two quaternion vectors $\mathbf{v}_1, \mathbf{w}_1$ such that $\|\mathbf{v}_1\|=\|\mathbf{w}_1\|=1$, and $\bm{\sigma}_1=\langle\mathbf{v}_1,\mathbf{w}_1\rangle\neq 0$. Set $\mathbf{v}_0=\mathbf{w}_0=\mathbf{0}\in\mathbb{Q}^n$ and $\bm{\tau}_1=\bm{\rho}_1=0$, $\bm{\varepsilon}_1\neq 0$.\\
		\STATE For $j=1,2,\cdots,m$ Do:\\
\STATE   $\bm{\alpha}_{j}=\bm{\sigma}_j^{-1}\langle \mathbf{A}\mathbf{v}_j,\mathbf{w}_j\rangle$, $\overline{\bm{\alpha}}_{j}=\bm{\sigma}_j^{-*}\langle \mathbf{A}\mathbf{v}_j,\mathbf{w}_j\rangle^*$  \\
	\STATE $\overline{\mathbf{v}}_{j+1}:=\mathbf{A}\mathbf{v}_j-\mathbf{v}_{j}\bm{\alpha}_j-\mathbf{v}_{j-1}\bm{\tau}_j$\\
		\STATE $\overline{\mathbf{w}}_{j+1}:=\mathbf{A}^*\mathbf{w}_j-\mathbf{w}_{j}\overline{\bm{\alpha}}_{j}-\mathbf{w}_{j-1}(\bm{\rho}_j\bm{\sigma}_{j-1}^{-*}\bm{\sigma}_j^*)$\\
\STATE $\bm{\rho}_{j+1}=\|\overline{\mathbf{v}}_{j+1}\|$, and $\bm{\varepsilon}_{j+1}=\|\overline{\mathbf{w}}_{j+1}\|$\\
		\STATE   $\mathbf{w}_{j+1}:=\overline{\mathbf{w}}_{j+1}/\bm{\varepsilon}_{j+1}$\\
		\STATE $\mathbf{v}_{j+1}:=\overline{\mathbf{v}}_{j+1}/\bm{\rho}_{j+1}$\\
  \STATE $\bm{\sigma}_{j+1}=\langle\mathbf{v}_{j+1},\mathbf{w}_{j+1}\rangle$
  \STATE $\bm{\tau}_{j+1}=\bm{\varepsilon}_{j+1}\bm{\sigma}_{j}^{-1}\bm{\sigma}_{j+1}$
	\end{algorithmic}
\end{algorithm}

As seen, Algorithm \ref{algorithm-2} builds up a pair of biconjugate orthonormalized bases for the quaternion Krylov subspaces $\mathcal{K}_m(\mathbf{A},\mathbf{v}_1)$ and $\mathcal{K}_m(\mathbf{A}^*,\mathbf{w}_1)$, where the quaternion vectors $\mathbf{v}_j$ belongs to $\mathcal{K}_m(\mathbf{A},\mathbf{v}_1)$, while the $\mathbf{w}_j$'s are in the latter subspace. Moreover, using the real counterpart \eqref{2-1}, we can also see that the quaternion matrix-vector multiplications $\mathbf{A}\mathbf{v}_j$ and  $\mathbf{A}^*\mathbf{w}_j,$ and inner products $\langle \mathbf{A}\mathbf{v}_j,\mathbf{w}_{j}\rangle$ and $\langle {\mathbf{v}}_{j+1}, {\mathbf{w}}_{j+1}\rangle$ can be implemented by
\begin{equation}\label{3-33}
	\begin{aligned}
		\mathbf{A}\mathbf{v}_j&=\mathcal{R}^{-1}(\mathcal{R}(\mathbf{A})\mathcal{R}(\mathbf{v}_j)),\ \ \mathbf{A}^*\mathbf{w}_j=\mathcal{R}^{-1}(\mathcal{R}(\mathbf{A}^*)\mathcal{R}(\mathbf{w}_j)),\\
\langle \mathbf{A}\mathbf{v}_j,\mathbf{w}_{j}\rangle&=\mathcal{R}^{-1}(\mathcal{R}(\mathbf{w}_j^*)\mathcal{R}(\mathbf{A}\mathbf{v}_j)),\ \ \langle{\mathbf{v}}_{j+1},{\mathbf{w}}_{j+1}\rangle=\mathcal{R}^{-1}(\mathcal{R}({\mathbf{w}}_{j+1}^*)\mathcal{R}({\mathbf{v}}_{j+1})).
	\end{aligned}
\end{equation}
The crucial work of the above relations is how to fast compute  $\mathcal{R}(\mathbf{A})\mathcal{R}(\mathbf{v}_j)$, $\mathcal{R}(\mathbf{A}^*)$\\$\mathcal{R}(\mathbf{w}_j)$, $\mathcal{R}(\mathbf{w}_j^*)\mathcal{R}(\mathbf{A}\mathbf{v}_j)$ and $\mathcal{R}({\mathbf{w}}_{j+1}^*)\mathcal{R}({\mathbf{v}}_{j+1})$. From a practical point of view, this goal can be achieved by only computing the first block columns of the desired real counterparts, that is,
\begin{equation}\label{3-44}
	\begin{aligned}
\mathcal{R}(\mathbf{A}\mathbf{v}_j)_c&=\mathcal{R}(\mathbf{A})\mathcal{R}(\mathbf{v}_j)_c,\ \ \mathcal{R}(\mathbf{A}^*\mathbf{w}_j)_c=\mathcal{R}(\mathbf{A})^T\mathcal{R}(\mathbf{w}_j)_c,\\
		\mathcal{R}(\langle \mathbf{A}\mathbf{v}_j,\mathbf{w}_{j}\rangle)_c&=\mathcal{R}(\mathbf{w}_j)^T\mathcal{R}(\mathbf{A}\mathbf{v}_j)_c,\ \ \bm{\sigma}_{j+1}=|\mathcal{R}({\mathbf{w}}_{j+1})^T\mathcal{R}({\mathbf{v}}_{j+1})_c|^{\frac{1}{2}}.
	\end{aligned}
\end{equation}
This saves storage requirements and three-quarters of the theoretical costs. From the above results, we can deduce the following theorem.

\begin{theorem}\label{theorem-4}
    Let $\mathbf{V}_m:=[\mathbf{v}_1,\mathbf{v}_2,\cdots,\mathbf{v}_m]$, $\mathbf{W}_m:=[\mathbf{w}_1,\mathbf{w}_2,\cdots,\mathbf{w}_m]$ and 
    $$
    \mathbf{H}_m:=\begin{bmatrix}
\bm{\alpha}_1&\bm{\tau}_2&&&\\
\bm{\rho}_2&\bm{\alpha}_2&\bm{\tau}_3&&\\
&\ddots&\ddots&\ddots&\\
&&\bm{\rho}_{m-1}&\bm{\alpha}_{m-1}&\bm{\tau}_{m}\\
&&&\bm{\rho}_{m}&\bm{\alpha}_{m}\\
\end{bmatrix},\quad \mathbf{H}_{m+1,m}:=\begin{bmatrix}
\mathbf{H}_m\\
\bm{\rho}_{m+1}\mathbf{e}_m^*
\end{bmatrix},$$$$
\overline{\mathbf{H}}_m:=\begin{bmatrix}
\overline{\bm{\alpha}}_1&\overline{\bm{\tau}}_2&&&\\
\bm{\rho}_2&\overline{\bm{\alpha}}_2&\overline{\bm{\tau}}_3&&\\
&\ddots&\ddots&\ddots&\\
&&\bm{\rho}_{m-1}&\overline{\bm{\alpha}}_{m-1}&\overline{\bm{\tau}}_{m}\\
&&&\bm{\rho}_{m}&\overline{\bm{\alpha}}_{m}\\
\end{bmatrix},\quad \overline{\mathbf{H}}_{m+1,m}:=\begin{bmatrix}
\overline{\mathbf{H}}_m\\
\bm{\rho}_{m+1}\mathbf{e}_m^*
\end{bmatrix},
    $$
where $\overline{\bm{\tau}}_l=\bm{\varepsilon}_{l}\bm{\sigma}_{l-1}^{-*}\bm{\sigma}_{l}^*$, $l=2,\cdots,m$, $\bm{\rho}_{2},\cdots, \bm{\rho}_{m+1}$ and $\bm{\varepsilon}_{l}$ are positive real numbers.
     If Algorithm \ref{algorithm-2} proceeds $m$ steps successfully, then the quaternion vectors $\mathbf{v}_i$ and $\mathbf{w}_j$, $i,j=1,\cdots,m$, form a pair of biconjugate orthonormal bases, i.e.,
    \begin{equation}\label{3-4}
\langle\mathbf{v}_i,\mathbf{w}_j\rangle:=\left\{\begin{aligned}
&\bm{\sigma}_j\quad\quad \mbox{{\rm if}}\quad i=j,\\
&0\quad\quad\ \mbox{{\rm if}}\quad i\neq j.
\end{aligned}
\right.
\end{equation}
Moreover, $\{\mathbf{v}_i\}_{i=1,\cdots,m}$ is a basis of $\mathcal{K}_m(\mathbf{A},\mathbf{v}_1)$, while $\{\mathbf{w}_j\}_{j=1,\cdots,m}$ is a basis of $\mathcal{K}_m(\mathbf{A}^*,\mathbf{w}_1)$, and the following relations are true,
\begin{subequations}
\begin{align}
  &\mathbf{W}_m^*\mathbf{V}_m=\mathbf{D}_m,\label{eq:aa}\\
&\mathbf{A}\mathbf{V}_m=\mathbf{V}_{m+1}\mathbf{H}_{m+1,m},\label{eq:bb} \\
&\mathbf{A}^*\mathbf{W}_m=\mathbf{W}_{m+1}\Gamma_{m+1}^{-1}\overline{\mathbf{H}}_{m+1,m}\Gamma_{m}, \label{eq:cc}\\
&\mathbf{W}_m^*\mathbf{A}\mathbf{V}_m=\mathbf{D}_{m}\mathbf{H}_{m},\label{eq:dd} 
\end{align}
\end{subequations}
where $\mathbf{D}_m=diag(\bm{\sigma}_1,\bm{\sigma}_2,\cdots,\bm{\sigma}_m)$ with $\bm{\sigma}_i\neq 0$, $i=1,\cdots,m$, and $\Gamma_{m}=diag(\bm{s}_1,\bm{s}_2,$\\$\cdots,\bm{s}_m)$, $\bm{s}_i\in\mathbb{R}$, with
$$
\bm{s}_i:=\left\{\begin{aligned}
&1\quad\quad \quad\quad\quad\quad\mbox{\rm if}\quad i=1,\\
&\bm{s}_{i-1}\bm{\rho}_i/\bm{\varepsilon}_{i}\quad\quad\quad \mbox{\rm if}\quad i>1.
\end{aligned}
\right.
$$
\end{theorem}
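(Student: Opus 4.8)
The plan is to argue by induction on the iteration index, establishing the biconjugate orthonormality \eqref{3-4} first and then reading off the matrix identities \eqref{eq:aa}--\eqref{eq:dd} as bookkeeping consequences. Before starting I would record two elementary facts about the quaternionic inner product that get used repeatedly: the adjoint relation $\langle\mathbf{A}\mathbf{x},\mathbf{y}\rangle=\langle\mathbf{x},\mathbf{A}^*\mathbf{y}\rangle$, which follows directly from the definition $\langle\mathbf{x},\mathbf{y}\rangle=\sum_k(\mathbf{y})_k^*(\mathbf{x})_k$ together with $(\mathbf{A}^*)_{ki}=(A_{ik})^*$, and left conjugate-linearity in the second slot, $\langle\mathbf{x},\mathbf{y}\bm{\gamma}\rangle=\bm{\gamma}^*\langle\mathbf{x},\mathbf{y}\rangle$, which comes from combining quaternionic hermiticity with right linearity. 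Since the scaling factors $\bm{\rho}_j$ and $\bm{\varepsilon}_j$ are real, they commute with every quaternion, a fact I would flag at the outset because it is exactly what makes the coefficient manipulations close.

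For the induction, the base case $m=1$ is immediate from line 1 of Algorithm \ref{algorithm-2}. Assuming \eqref{3-4} holds through index $j$, I would verify the three new relations $\langle\mathbf{v}_{j+1},\mathbf{w}_{j+1}\rangle=\bm{\sigma}_{j+1}$ (which is line 9 by definition), $\langle\mathbf{v}_{j+1},\mathbf{w}_i\rangle=0$ for $i\le j$, and $\langle\mathbf{v}_i,\mathbf{w}_{j+1}\rangle=0$ for $i\le j$. Each orthogonality is checked by expanding $\overline{\mathbf{v}}_{j+1}$ (resp. $\overline{\mathbf{w}}_{j+1}$) via lines 4--5, pairing against the opposite basis vector, and invoking the inductive hypothesis. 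The diagonal and first off-diagonal cases are where the coefficient choices earn their keep: the case $i=j$ uses $\bm{\alpha}_j=\bm{\sigma}_j^{-1}\langle\mathbf{A}\mathbf{v}_j,\mathbf{w}_j\rangle$, while the case $i=j-1$ requires rewriting $\langle\mathbf{A}\mathbf{v}_j,\mathbf{w}_{j-1}\rangle=\langle\mathbf{v}_j,\mathbf{A}^*\mathbf{w}_{j-1}\rangle$, substituting the $\mathbf{w}$-recurrence for $\mathbf{A}^*\mathbf{w}_{j-1}=\mathbf{w}_j\bm{\varepsilon}_j+\cdots$, and checking that the surviving term $\bm{\varepsilon}_j\bm{\sigma}_j$ cancels exactly against $\bm{\sigma}_{j-1}\bm{\tau}_j$ by the definition $\bm{\tau}_j=\bm{\varepsilon}_j\bm{\sigma}_{j-1}^{-1}\bm{\sigma}_j$ (realness of $\bm{\varepsilon}_j$ is essential here). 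The remote cases $i<j-1$ reduce to $\langle\mathbf{v}_j,\mathbf{A}^*\mathbf{w}_i\rangle=0$ using that $\mathbf{A}^*\mathbf{w}_i\in\mathrm{span}\{\mathbf{w}_1,\dots,\mathbf{w}_{i+1}\}$ with $i+1<j$. The $\mathbf{w}$-side orthogonalities are symmetric, with the asymmetric coefficient $\bm{\rho}_j\bm{\sigma}_{j-1}^{-*}\bm{\sigma}_j^*$ in line 5 playing the dual role. Once \eqref{3-4} holds, linear independence of each family is immediate (pairing $\sum_i\mathbf{v}_i\bm{\gamma}_i=\mathbf{0}$ with $\mathbf{w}_k$ forces $\bm{\sigma}_k\bm{\gamma}_k=0$, hence $\bm{\gamma}_k=0$), and the span equalities $\mathrm{span}\{\mathbf{v}_1,\dots,\mathbf{v}_m\}=\mathcal{K}_m(\mathbf{A},\mathbf{v}_1)$ follow by a routine induction from the rearranged recurrence $\mathbf{A}\mathbf{v}_j=\mathbf{v}_{j+1}\bm{\rho}_{j+1}+\mathbf{v}_j\bm{\alpha}_j+\mathbf{v}_{j-1}\bm{\tau}_j$.

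With biorthonormality in hand, \eqref{eq:aa} is just the entrywise restatement $(\mathbf{W}_m^*\mathbf{V}_m)_{ij}=\langle\mathbf{v}_j,\mathbf{w}_i\rangle$, and \eqref{eq:bb} is the columnwise matrix form of the rearranged $\mathbf{v}$-recurrence above. Identity \eqref{eq:dd} then follows by left-multiplying \eqref{eq:bb} by $\mathbf{W}_m^*$ and using $\mathbf{W}_m^*\mathbf{V}_{m+1}=[\mathbf{D}_m\ \mathbf{0}]$, so that the trailing row $\bm{\rho}_{m+1}\mathbf{e}_m^*$ of $\mathbf{H}_{m+1,m}$ is annihilated.

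The step I expect to be the real obstacle is \eqref{eq:cc}, because non-commutativity forces $\overline{\bm{\tau}}_l\ne\bm{\tau}_l^*$, so $\mathbf{A}^*\mathbf{W}_m$ does not simply transpose into $\overline{\mathbf{H}}_{m+1,m}$; the diagonal similarity $\Gamma_m$ is introduced precisely to absorb this discrepancy. I would compute the $(i,j)$ entry of $\Gamma_{m+1}^{-1}\overline{\mathbf{H}}_{m+1,m}\Gamma_m$ as $\bm{s}_i^{-1}(\overline{\mathbf{H}}_{m+1,m})_{ij}\bm{s}_j$ (legitimate since the $\bm{s}_i$ are real), and show column by column that its three nonzero rows reproduce exactly the coefficients of the $\mathbf{A}^*$-recurrence in line 5. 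The crucial check is that the superdiagonal rescaling converts $\overline{\bm{\tau}}_j$ back into the line-5 coefficient, namely $\overline{\bm{\tau}}_j(\bm{s}_j/\bm{s}_{j-1})=\overline{\bm{\tau}}_j(\bm{\rho}_j/\bm{\varepsilon}_j)=\bm{\rho}_j\bm{\sigma}_{j-1}^{-*}\bm{\sigma}_j^*$, where $\bm{s}_j=\bm{s}_{j-1}\bm{\rho}_j/\bm{\varepsilon}_j$ and the cancellation of the real factor $\bm{\varepsilon}_j$ in $\overline{\bm{\tau}}_j=\bm{\varepsilon}_j\bm{\sigma}_{j-1}^{-*}\bm{\sigma}_j^*$ do the work; simultaneously the subdiagonal entry rescales as $\bm{\rho}_{j+1}(\bm{s}_j/\bm{s}_{j+1})=\bm{\varepsilon}_{j+1}$ and the diagonal $\overline{\bm{\alpha}}_j$ is fixed (again by realness of $\bm{s}_j$). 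Matching these against line 5 completes the identity. Throughout, the only genuine danger is mis-ordering a product or dropping a conjugate, so I would keep scalars strictly on the right of vectors and track every $(\cdot)^*$ explicitly.
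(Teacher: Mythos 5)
Your proposal is correct, and its inductive spine is the same as the paper's proof of \eqref{3-4}: the identical three-case split $i=j$, $i=j-1$, $i<j-1$, with the diagonal case killed by $\bm{\alpha}_j=\bm{\sigma}_j^{-1}\langle\mathbf{A}\mathbf{v}_j,\mathbf{w}_j\rangle$ and the first off-diagonal case by the cancellation $\bm{\sigma}_{j-1}\bm{\tau}_j=\bm{\varepsilon}_j\bm{\sigma}_{j-1}\bm{\sigma}_{j-1}^{-1}\bm{\sigma}_j=\bm{\varepsilon}_j\bm{\sigma}_j$, exactly as in the paper. Where you genuinely diverge is in the matrix identities, and there your treatment is more complete than the paper's. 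The paper proves only the biorthogonality in detail, dismisses \eqref{eq:bb}--\eqref{eq:cc} as ``similar to those presented in [Saad, Saad1]'', and deduces \eqref{eq:dd} by an appeal to Theorem \ref{theorem-3}; you instead verify \eqref{eq:cc} entrywise, and your rescaling computations are exactly right: $\bm{s}_{j-1}^{-1}\overline{\bm{\tau}}_j\bm{s}_j=(\bm{\rho}_j/\bm{\varepsilon}_j)\,\overline{\bm{\tau}}_j=\bm{\rho}_j\bm{\sigma}_{j-1}^{-*}\bm{\sigma}_j^*$ recovers the superdiagonal coefficient of line 5 of Algorithm \ref{algorithm-2}, $\bm{s}_j^{-1}\overline{\bm{\alpha}}_j\bm{s}_j=\overline{\bm{\alpha}}_j$ fixes the diagonal, and $\bm{s}_{j+1}^{-1}\bm{\rho}_{j+1}\bm{s}_j=\bm{\varepsilon}_{j+1}$ gives the coefficient of $\mathbf{w}_{j+1}$, since $\bm{s}_{j+1}=\bm{s}_j\bm{\rho}_{j+1}/\bm{\varepsilon}_{j+1}$. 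This explicitness buys something real: as Remark \ref{remark-1} itself notes, $\overline{\mathbf{H}}_{m+1,m}\neq\mathbf{H}_{m+1,m}$ over $\mathbb{Q}$, so the quaternionic \eqref{eq:cc} is \emph{not} literally the complex-case identity from the cited references, and the $\Gamma$-similarity check you perform is precisely the new ingredient the paper leaves implicit. Likewise, your derivation of \eqref{eq:dd} by left-multiplying \eqref{eq:bb} with $\mathbf{W}_m^*$ and using $\mathbf{W}_m^*\mathbf{V}_{m+1}=[\mathbf{D}_m\ \mathbf{0}]$ to annihilate the trailing row $\bm{\rho}_{m+1}\mathbf{e}_m^*$ is cleaner and more direct than the paper's citation of Theorem \ref{theorem-3}, which establishes existence of a tridiagonalization rather than this specific identity. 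You also supply the linear-independence and span arguments (pairing a vanishing combination against $\mathbf{w}_k$ to force $\bm{\sigma}_k\bm{\gamma}_k=0$) that the paper merely asserts, and your insistence on keeping scalars to the right of vectors and exploiting realness of $\bm{\rho}_j,\bm{\varepsilon}_j,\bm{s}_j$ is exactly the discipline the non-commutative setting demands.
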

\begin{proof}
We first prove the biconjugate orthonormality of quaternion vectors $\mathbf{v}_{k}$ and $\mathbf{w}_{k}$ by induction. Clearly, we obtain from Algorithm \ref{algorithm-2} that $\mathbf{v}_{k}$ and $\mathbf{w}_{k}$ are unit vectors. This means that we only need to show the biorthogonality of the above vectors. When $k=1$, it follows that $\langle\mathbf{v}_1,\mathbf{w}_1\rangle=\bm{\sigma}_1$. Assume now that the assertion holds for the case $1\leq k\leq j$. For $k=j+1$, we show that
$\langle\mathbf{v}_{j+1},\mathbf{w}_i\rangle=0$ holds for $i\leq j$. If $i=j$, then it follows that
$$\begin{aligned}
\langle\mathbf{v}_{j+1},\mathbf{w}_j\rangle&=\bm{\rho}_{j+1}^{-1}[\langle\mathbf{A}\mathbf{v}_j,\mathbf{w}_j\rangle-\langle\mathbf{v}_{j},\mathbf{w}_j\rangle\bm{\alpha}_j-\langle\mathbf{v}_{j-1},\mathbf{w}_j\rangle\bm{\tau}_j]\\
&=\bm{\rho}_{j+1}^{-1}[\langle\mathbf{A}\mathbf{v}_j,\mathbf{w}_j\rangle-\langle\mathbf{v}_{j},\mathbf{w}_j\rangle\bm{\sigma}_j^{-1}\langle \mathbf{A}\mathbf{v}_j,\mathbf{w}_j\rangle]\\
&=0.
\end{aligned}
$$
Consider now the inner product $\langle\mathbf{v}_{j+1},\mathbf{w}_i\rangle$ with $i<j$, we have 
$$\begin{aligned}
\langle\mathbf{v}_{j+1},\mathbf{w}_i\rangle&=\bm{\rho}_{j+1}^{-1}[\langle\mathbf{v}_j,\mathbf{A}^*\mathbf{w}_i\rangle-\langle\mathbf{v}_{j},\mathbf{w}_i\rangle\bm{\alpha}_j-\langle\mathbf{v}_{j-1},\mathbf{w}_i\rangle\bm{\tau}_j]\\
&=\bm{\rho}_{j+1}^{-1}[\langle\mathbf{v}_j,\mathbf{w}_{i+1}\bm{\varepsilon}_{i+1}+\mathbf{w}_{i}\overline{\bm{\alpha}}_{i}+\mathbf{w}_{i-1}(\bm{\rho}_i\bm{\sigma}_{i-1}^{-*}\bm{\sigma}_j^*)\rangle-\langle\mathbf{v}_{j-1},\mathbf{w}_i\rangle\bm{\tau}_j]
\\
&=\bm{\rho}_{j+1}^{-1}[\langle\mathbf{v}_j,\mathbf{w}_{i+1}\rangle\bm{\varepsilon}_{i+1}-\langle\mathbf{v}_{j-1},\mathbf{w}_i\rangle\bm{\tau}_j]
\end{aligned}$$
For $i<j-1$, it is easy to see that the induction hypothesis vanishes the inner products in the above expression. For $i=j-1$, it follows that 
$$\begin{aligned}
\langle\mathbf{v}_{j+1},\mathbf{w}_{j-1}\rangle&=\bm{\rho}_{j+1}^{-1}[\langle\mathbf{v}_j,\mathbf{w}_{j}\rangle\bm{\varepsilon}_{j}-\langle\mathbf{v}_{j-1},\mathbf{w}_{j-1}\rangle\bm{\tau}_j]\\
&=\bm{\rho}_{j+1}^{-1}[\bm{\sigma}_{j}\bm{\varepsilon}_{j}-\bm{\varepsilon}_{j}\bm{\sigma}_{j-1}\bm{\sigma}_{j-1}^{-1}\bm{\sigma}_{j}]\\
&=0.
\end{aligned}$$
Likewise, we can prove that $\langle\mathbf{v}_i,\mathbf{w}_{j+1}\rangle=0$ holds for $i\leq j$. The proof of the matrix relations \eqref{eq:bb}-\eqref{eq:cc} is similar to those presented in \cite{Saad, Saad1}, hence we omit it. By Theorem \ref{theorem-3}, the remaining relation \eqref{eq:dd} is surely true.
\end{proof}
\begin{remark}\label{remark-1}
Note that the matrix relations \eqref{eq:bb} and \eqref{eq:cc} are distinct from those given in \cite{Saad, Saad1}, since the matrix $\overline{\mathbf{H}}_{m+1,m}$ is not equal to ${\mathbf{H}}_{m+1,m}$, even though both of them are tridiagonal quaternion matrices. It is also worth underlying that Algorithm \ref{algorithm-2} reduces to the classical biconjugate orthonormalization process if all its elements are complex/real numbers.
\end{remark}

From Algorithm \ref{algorithm-2}, we can see that a pair of biconjugate orthonormalized bases for $\mathcal{K}_m(\mathbf{A},\mathbf{v}_1)$ and $\mathcal{K}_m(\mathbf{A}^*,\mathbf{w}_1)$ was deduced by three-term recurrences. However, as stated by \cite{Freund2, Lanczos1, Gutknecht0}, the coupled two-term recurrences for achieving the same goal, which is based on a suitable second set of basis vectors for underlying quaternion Krylov subspace, perform more accurate and stable than the former. This motivates us to generate a pair of biconjugate orthonormalized bases based on the desired coupled two-term recurrences. Moreover, to exclude overflow or underflow, we scale the obtained quaternion basis vectors to unit length. Using the quaternion LU decomposition \cite{Wang, Li}, we now assume that the matrices ${\mathbf{H}}_{m+1,m}$ and $\overline{\mathbf{H}}_{m+1,m}$ can be decomposed into the following forms
\begin{equation}\label{3-5}
{\mathbf{H}}_{m+1,m}={\mathbf{L}}_{m+1,m}{\mathbf{U}}_{m},\quad\quad \overline{\mathbf{H}}_{m+1,m}=\overline{\mathbf{L}}_{m+1,m}\overline{\mathbf{U}}_{m},
\end{equation}
where $$\begin{aligned}
{\mathbf{L}}_{m+1,m}&=\begin{bmatrix}
\bm{\tau}_1^{(1)}&&&\\
\bm{\rho}_2&\bm{\tau}_2^{(1)}&&\\
&\ddots&\ddots&\\
&&\bm{\rho}_{m}&\bm{\tau}_{m}^{(1)}\\
&&&\bm{\rho}_{m+1}\\
\end{bmatrix},\quad {\mathbf{U}}_{m}=\begin{bmatrix}
1&\mathbf{u}_2^{(1)}&&&\\
&1&\mathbf{u}_3^{(1)}&&\\
&&\ddots&\ddots&\\
&&&1&\mathbf{u}_m^{(1)}\\
&&&&1\\
\end{bmatrix},\\
\overline{\mathbf{L}}_{m+1,m}&=\begin{bmatrix}
\bm{\tau}_1^{(2)}&&&\\
\bm{\rho}_2&\bm{\tau}_2^{(2)}&&\\
&\ddots&\ddots&\\
&&\bm{\rho}_{m}&\bm{\tau}_{m}^{(2)}\\
&&&\bm{\rho}_{m+1}\\
\end{bmatrix},\quad \overline{\mathbf{U}}_{m}=\begin{bmatrix}
1&\mathbf{u}_2^{(2)}&&&\\
&1&\mathbf{u}_3^{(2)}&&\\
&&\ddots&\ddots&\\
&&&1&\mathbf{u}_m^{(2)}\\
&&&&1\\
\end{bmatrix}
\end{aligned}$$
are bidiagonal quaternion matrices with $\bm{\tau}_{k}^{(r)}, \mathbf{u}_s^{(r)}\in \mathbb{Q}$, $k=1,\cdots,m, s=2,\cdots,m,$ $ r=1,2$, and $\bm{\rho}_{2},\cdots,\bm{\rho}_{m+1}$ being positive real numbers. Let
$$
\mathbf{P}_m=[\mathbf{p}_1\cdots,\mathbf{p}_m]=\mathbf{V}_m\mathbf{U}_m^{-1},\quad\quad \mathbf{Q}_m=[\mathbf{q}_1\cdots,\mathbf{q}_m]=\mathbf{W}_m\Gamma_{m}^{-1}\overline{\mathbf{U}}_{m}^{-1}\Gamma_{m}.
$$
Next, we will show that the coupled two-term recurrences are generated by the matrices $\mathbf{P}_m$ and $\mathbf{Q}_m$. By Eqs.\eqref{eq:aa}-\eqref{eq:dd} and the above results, it follows that
\begin{subequations}
\begin{align}
\mathbf{V}_m&=\mathbf{P}_m\mathbf{U}_m,\label{eq:aa1}\\
\mathbf{W}_m&=\mathbf{Q}_m\Gamma_{m}^{-1}\overline{\mathbf{U}}_{m}\Gamma_{m},\label{eq:bb1}\\
\mathbf{A}\mathbf{P}_m&=\mathbf{V}_{m+1}\mathbf{L}_{m+1,m},\label{eq:cc1}\\
\mathbf{A}^*\mathbf{Q}_m&=\mathbf{W}_{m+1}\Gamma_{m+1}^{-1}\overline{\mathbf{L}}_{m+1,m}\Gamma_{m},\label{eq:dd1}
\end{align}
\end{subequations}
and we thus have the following relations
\begin{subequations}
\begin{align}
\mathbf{v}_k&=\mathbf{p}_k+\mathbf{p}_{k-1}\bm{\mu}_k^{(1)},\label{eq:aa2}\\
\mathbf{w}_k&=\mathbf{q}_k+\mathbf{q}_{k-1}(\bm{s}_k/\bm{s}_{k-1})\bm{\mu}_k^{(2)},\label{eq:bb2}\\
\mathbf{A}\mathbf{p}_k&=\mathbf{v}_{k}\bm{\tau}_{k}^{(1)}+\mathbf{v}_{k+1}\bm{\rho}_{k+1},\label{eq:cc2}\\
\mathbf{A}^*\mathbf{q}_k&=\mathbf{w}_{k}\bm{\tau}_{k}^{(2)}+\mathbf{w}_{k+1}(\bm{s}_{k}/\bm{s}_{k+1})\bm{\rho}_{k+1},\label{eq:dd2}
\end{align}
\end{subequations}
where $\mathbf{p}_0=\mathbf{q}_0=\bf{0}\in \mathbb{Q}^n,$ $k=1,\cdots,m$.
From the line 5 of Algorithm \ref{algorithm-2} and $\bm{s}_{k+1}=\bm{s}_{k}\bm{\rho}_{k+1}/\bm{\varepsilon}_{k+1}$, the mentioned-above relations can be rewritten as
\begin{subequations}
\begin{align}
\mathbf{p}_k&=\mathbf{v}_k-\mathbf{p}_{k-1}\bm{\mu}_k^{(1)},\label{eq:aa3}\\
\mathbf{q}_k&=\mathbf{w}_k-\mathbf{q}_{k-1}(\bm{s}_k/\bm{s}_{k-1})\bm{\mu}_k^{(2)},\label{eq:bb3}\\
\overline{\mathbf{v}}_{k+1}&=\mathbf{A}\mathbf{p}_k-\mathbf{v}_{k}\bm{\tau}_{k}^{(1)},\label{eq:cc3}\\
\overline{\mathbf{w}}_{k+1}&=\mathbf{A}^*\mathbf{q}_k-\mathbf{w}_{k}\bm{\tau}_{k}^{(2)},\label{eq:dd3}
\end{align}
\end{subequations}
where $\overline{\mathbf{v}}_{k+1}=\mathbf{v}_{k+1}\bm{\rho}_{k+1}$ and $\overline{\mathbf{w}}_{k+1}=\mathbf{w}_{k+1}\bm{\varepsilon}_{k+1}$. Inspired by the derivation in \cite{Freund2,Bücker}, here we expect that the quaternion vectors $\mathbf{p}_k$ and $\mathbf{q}_k$ to the matrices $\mathbf{P}_m$ and $\mathbf{Q}_m$ are $\mathbf{A}$-biorthogonal, i.e.,
 \begin{equation}\label{3-6}
\langle\mathbf{A}\mathbf{p}_k,\mathbf{q}_j\rangle:=\left\{\begin{aligned}
&\bm{l}_k\neq 0\quad\quad \mbox{{\rm if}}\quad k=j,\\
&0\quad\quad\quad\quad\ \mbox{{\rm if}}\quad k\neq j.
\end{aligned}
\right.
\end{equation}
 Then we present the details for computing the parameters $\bm{\mu}_k^{(1)}, \bm{\mu}_k^{(2)}, \bm{\tau}_{k}^{(1)}, $ and $ \bm{\tau}_{k}^{(1)}$. 
 Taking the right-hand inner product on both sides of \eqref{eq:cc3} with $\mathbf{w}_k$, it follows that
 $$\begin{aligned}
0&=\langle\mathbf{A}\mathbf{p}_k-\mathbf{v}_k\bm{\tau}_{k}^{(1)},\mathbf{w}_k\rangle\\
&=\langle\mathbf{A}\mathbf{p}_k,\mathbf{w}_k\rangle-  \langle\mathbf{v}_k,\mathbf{w}_k\rangle\bm{\tau}_{k}^{(1)}\\
&=\langle\mathbf{A}\mathbf{p}_k,\mathbf{q}_k+\mathbf{q}_{k-1}(\bm{s}_k/\bm{s}_{k-1})\bm{\mu}_k^{(2)}\rangle-  \langle\mathbf{v}_k,\mathbf{w}_k\rangle\bm{\tau}_{k}^{(1)}\\
&=\langle\mathbf{A}\mathbf{p}_k,\mathbf{q}_k\rangle-  \langle\mathbf{v}_k,\mathbf{w}_k\rangle\bm{\tau}_{k}^{(1)},\\
 \end{aligned}$$
which shows that 
$\bm{\tau}_{k}^{(1)}=\bm{\sigma}_{k}^{-1}\bm{l}_k.$ Similarly, for \eqref{eq:dd3}, we have
$$\begin{aligned}
0&=\langle\mathbf{A}^*\mathbf{q}_k-\mathbf{w}_k\bm{\tau}_{k}^{(2)},\mathbf{v}_k\rangle\\
&=\langle\mathbf{A}^*\mathbf{q}_k,\mathbf{v}_k\rangle-  \langle\mathbf{w}_k,\mathbf{v}_k\rangle\bm{\tau}_{k}^{(2)}\\
&=\langle\mathbf{A}^*\mathbf{q}_k,\mathbf{p}_k+\mathbf{p}_{k-1}\bm{\mu}_k^{(1)}\rangle-  \langle\mathbf{w}_k,\mathbf{v}_k\rangle\bm{\tau}_{k}^{(2)}\\
&=\langle\mathbf{A}^*\mathbf{q}_k,\mathbf{p}_k\rangle-  \langle\mathbf{w}_k,\mathbf{v}_k\rangle\bm{\tau}_{k}^{(2)},\\
 \end{aligned}$$
that is, $\bm{\tau}_{k}^{(2)}=\bm{\sigma}_{k}^{-*}\bm{l}_k^*$.  Taking the right-hand inner product on both sides of \eqref{eq:aa3} with $\mathbf{A}^*\mathbf{q}_{k-1}$, it follows from Eq.\eqref{3-4} that
$$\begin{aligned}
0&=\langle\mathbf{v}_k-\mathbf{p}_{k-1}\bm{\mu}_k^{(1)},\mathbf{A}^*\mathbf{q}_{k-1}\rangle\\
&=\langle\mathbf{v}_k,\mathbf{w}_{k}\bm{\varepsilon}_{k}+\mathbf{w}_{k-1}\bm{\tau}_{k-1}^{(2)}\rangle-\langle\mathbf{p}_{k-1},\mathbf{A}^*\mathbf{q}_{k-1}\rangle\bm{\mu}_k^{(1)}\\
&=\langle\mathbf{v}_k,\mathbf{w}_{k}\rangle\bm{\varepsilon}_{k}-\langle\mathbf{A}\mathbf{p}_{k-1},\mathbf{q}_{k-1}\rangle\bm{\mu}_k^{(1)},
\end{aligned}
$$
that is, $\bm{\mu}_k^{(1)}=\bm{\varepsilon}_{k}\bm{l}_{k-1}^{-1}\bm{\sigma}_{k}$. Moreover, we can deduce that the parameter $\bm{\mu}_k^{(2)}=\bm{\varepsilon}_{k}\bm{l}_{k-1}^{-*}\bm{\sigma}_{k}^*$ in a similar way.

In what follows, we conversely prove that the obtained quaternion vectors $\mathbf{p}_k$ and $\mathbf{q}_k$ form a $\mathbf{A}$-biorthogonal system having the form of Eq.\eqref{3-6}. Denote by $\mathbf{L}_k$ the matrix obtained from $\mathbf{L}_{k+1,k}$ by deleting its last row. From Eqs.\eqref{eq:dd} and \eqref{3-5}, it follows that
$$\begin{aligned}
\mathbf{Q}_k^*\mathbf{A}\mathbf{P}_k&=\Gamma_{k}^{-*}\overline{\mathbf{U}}_{k}^{-*}\Gamma_{k}^*\mathbf{W}_k^*\mathbf{A}\mathbf{V}_k\mathbf{U}_k^{-1}\\
&=\Gamma_{k}^{-*}\overline{\mathbf{U}}_{k}^{-*}\Gamma_{k}^*\mathbf{D}_k\mathbf{H}_k\mathbf{U}_k^{-1}\\
&=\Gamma_{k}^{-*}\overline{\mathbf{U}}_{k}^{-*}\Gamma_{k}^*\mathbf{D}_k\mathbf{L}_k\\
\end{aligned}$$
is of the form
$$
\begin{bmatrix}
\bm{\sigma}_{1}\bm{\tau}_1^{(1)}&&&\\
-\frac{\bm{s}_2}{\bm{s}_1}(\bm{\mu}_{2}^{(2)})^*\bm{\sigma}_1\bm{\tau}_1^{(1)}+\bm{\sigma}_2\bm{\rho}_2&\bm{\sigma}_{2}\bm{\tau}_2^{(1)}&&\\
&\ddots&\ddots&\\
&&-\frac{\bm{s}_{k}}{\bm{s}_{k-1}}(\bm{\mu}_{k}^{(2)})^*\bm{\sigma}_{k-1}\bm{\tau}_{k-1}^{(1)}+\bm{\sigma}_k\bm{\rho}_k&\bm{\sigma}_{k}\bm{\tau}_{k}^{(1)}\\
\end{bmatrix}.
$$
Substitute $\bm{\tau}_k^{(1)}$ and $\bm{\mu}_{k}^{(2)}$ into the above matrix, we can readily obtain that 
$$\mathbf{Q}_k^*\mathbf{A}\mathbf{P}_k:=\text{diag}(\bm{l}_1,\cdots,\bm{l}_k).$$
This shows that the scenario is surely true. According to the above discussion, the quaternion coupled two-term biconjugate orthonormalization procedure, which is mathematically equivalent to Algorithm \ref{algorithm-2} in exact arithmetic, can be described as follows.
\begin{algorithm}[htb]
	\caption{ Quaternion coupled two-term biconjugate orthonormalization procedure}\label{algorithm-3}
	\begin{algorithmic}[1]
		 \STATE  Choose two quaternion vectors $\mathbf{v}_1, \mathbf{w}_1$ such that $\|\mathbf{v}_1\|=\|\mathbf{w}_1\|=1$, and $\bm{\sigma}_1=\langle\mathbf{v}_1,\mathbf{w}_1\rangle\neq 0$. Set $\mathbf{p}_0=\mathbf{q}_0=\mathbf{0}\in\mathbb{Q}^n$ and $\bm{l}_0=0$.\\
		\STATE For $j=1,2,\cdots,m$ Do:\\
\STATE $\mathbf{p}_j:=\mathbf{v}_j-\mathbf{p}_{j-1}\bm{\varepsilon}_{j}\bm{l}_{j-1}^{-1}\bm{\sigma}_{j}$
\STATE $\mathbf{q}_j:=\mathbf{w}_j-\mathbf{q}_{j-1}\bm{\rho}_j\bm{l}_{j-1}^{-*}\bm{\sigma}_{j}^*$
\STATE $\bm{l}_j=\langle\mathbf{A}\mathbf{p}_j,\mathbf{q}_j\rangle$
	\STATE $\overline{\mathbf{v}}_{j+1}:=\mathbf{A}\mathbf{p}_j-\mathbf{v}_{j}\bm{\sigma}_{j}^{-1}\bm{l}_j$\\
		\STATE $\overline{\mathbf{w}}_{j+1}:=\mathbf{A}^*\mathbf{q}_j-\mathbf{w}_{j}\bm{\sigma}_{j}^{-*}\bm{l}_j^*$\\
\STATE $\bm{\rho}_{j+1}=\|\overline{\mathbf{v}}_{j+1}\|$, and $\bm{\varepsilon}_{j+1}=\|\overline{\mathbf{w}}_{j+1}\|$\\
		\STATE   $\mathbf{w}_{j+1}:=\overline{\mathbf{w}}_{j+1}/\bm{\varepsilon}_{j+1}$\\
		\STATE $\mathbf{v}_{j+1}:=\overline{\mathbf{v}}_{j+1}/\bm{\rho}_{j+1}$\\
	\STATE $\bm{\sigma}_{j+1}=\langle\mathbf{v}_{j+1},\mathbf{w}_{j+1}\rangle$
 \end{algorithmic}
\end{algorithm}

Similar to the classical QMR method, a major concern of Algorithm \ref{algorithm-3} occurs the exact breakdowns or near-breakdowns in case of $\bm{\sigma}_{j}=0$ or $\bm{l}_j=0$. The exact breakdowns, also called lucky breakdowns, are unlikely to be encountered in practice. However, the near-breakdowns will occur when $\bm{\sigma}_{j}$ or $\bm{l}_j$ are nonzero, but small in some sense. Fortunately, there are remedies for this problem that
allow the algorithm to continue in most cases and improve its numerical stability. One of the most popular remedies is the look-ahead techniques \cite{Freund1, Freund2, Parlett} that skip over possible near-breakdowns, but its drawback is the nonnegligible added complexity.  Indeed, the near-breakdowns in linear systems are rare, and their effect is generally benign.
Therefore, we think that a simpler remedy, such as restarting the biconjugate orthonormalization procedure, may well be adequate here.
\begin{algorithm}[htb]
	\caption{ Restarted Algorithm \ref{algorithm-3}}\label{algorithm-4}
	\begin{algorithmic}[1]
		 \STATE  Choose two quaternion vectors $\mathbf{v}_1, \mathbf{w}_1$ such that $\|\mathbf{v}_1\|=\|\mathbf{w}_1\|=1$, and $\bm{\sigma}_1=\langle\mathbf{v}_1,\mathbf{w}_1\rangle\neq 0$. Set $\mathbf{p}_0=\mathbf{q}_0=\mathbf{0}\in\mathbb{Q}^n$ and $\bm{l}_0=0$.\\
		\STATE  Generate a pair of biorthonormal bases and the quaternion matrices $\mathbf{L}_{m+1,m}$ and $\overline{\mathbf{L}}_{m+1,m}$ from Algorithm \ref{algorithm-3}.
	\STATE If satisfied then Stop, otherwise, the near-breakdowns occur, then set $\mathbf{v}_{1}=\mathbf{v}_{m},$ $\mathbf{w}_{1}=\mathbf{w}_{m}$ and go to step 1.
 \end{algorithmic}
\end{algorithm}

Again, the quaternion matrix-vector products and inner products, such as $\mathbf{A}\mathbf{p}_j, \bm{l}_j$, $\mathbf{A}^*\mathbf{q}_j$ and $\bm{\sigma}_j$, are theoretically computed by the real structure-preserving methods shown in Eq.\eqref{3-44}. We generate these products once their first block columns are obtained, saving arithmetic operations.

\section{Quaternion quasi-minimal residual method}\label{section-4}
In this section, we develop two different quaternion quasi-minimal residual (QQMR) methods for solving the quaternion linear systems \eqref{1-1}. Let $\mathbf{x}_0$ be an initial guess to the original linear systems. Then, the QQMR method is started with
 $\mathbf{w}_1=\mathbf{v}_1=\mathbf{r}_0/\bm{\beta}\neq 0$, in which $\mathbf{r}_0=\mathbf{b}-\mathbf{A}\mathbf{x}_0$ and $\bm{\beta}=\|\mathbf{r}_0\|_2$. An approximate solution $\mathbf{x}_m$ of QQMR at $m$-th iterate extracted from the affine subspace $\mathbf{x}_0+\mathcal{K}_m(\mathbf{A},\mathbf{v}_1)$ has the following form \begin{equation}\label{4-1}
\mathbf{x}_m:=\mathbf{x}_0+\mathbf{V}_m\mathbf{z}_m,
\end{equation}
where $\mathbf{V}_m\in\mathbf{Q}^{n\times m}$ is generated by either Algorithm \ref{algorithm-2} or Algorithm \ref{algorithm-3}, $\mathbf{z}_m\in\mathbb{Q}^m$ is determined by a quasi-minimal residual property.
 It follows from Eq.\eqref{eq:bb} that the corresponding residual vector is given by
\begin{equation}\label{4-2}
\begin{aligned}
\mathbf{r}_m&=\mathbf{b}_0-\mathbf{A}(\mathbf{x}_0+\mathbf{V}_m\mathbf{z}_m)\\
&=\mathbf{r}_0-\mathbf{V}_{m+1}\mathbf{H}_{m+1,m}\mathbf{z}_m,
\end{aligned}
\end{equation}
where $\mathbf{e}_1=[1,0,\cdots,0]\in\mathbb{R}^{n+1}$. 

The first QQMR method is based on the quaternion three-term recurrences, i.e., formulated by Algorithm \ref{algorithm-2}. Clearly, the residual norm obtained from \eqref{4-2}  is of the form
$$\|\mathbf{r}_m\|_2=\|\mathbf{V}_{m+1}(\bm{\beta}\mathbf{e}_1-\mathbf{H}_{m+1,m}\mathbf{z}_m)\|_2.$$
Notice that it is still a reasonable idea to
minimize the quasi-residual norm
\begin{equation}\label{4-3}
\|\bm{\beta}\mathbf{e}_1-\mathbf{H}_{m+1,m}\mathbf{z}_m\|_2=\min_{\mathbf{z}\in\mathbb{Q}^m}\|\bm{\beta}\mathbf{e}_1-\mathbf{H}_{m+1,m}\mathbf{z}\|_2
\end{equation}
and compute the approximate solution, even though the column-vectors of $\mathbf{V}_{m+1}$ are not orthogonal to each other. We note that all subdiagonal elements of $\mathbf{H}_{m+1,m}$ shown in Theorem \ref{theorem-4} are nonnegative real numbers and thus it has full rank, which means that the minimizer $\mathbf{z}_m$ of problem \eqref{4-3} is unique. Moreover, by taking advantage of the structure of $\mathbf{H}_{m+1,m}$, we can transform the tridiagonal quaternion matrix $\mathbf{H}_{m+1,m}$ into upper tridiagonal form by a series of generalized quaternion Givens rotations \cite{Jia4, Jia2}.  Assume now that the elements $\bm{\rho}_{i+1}\neq 0$ and $\mathbf{t}_{i}=\|\left[\begin{array}{ccccc}
	\bm{\alpha}_{i}&
	\bm{\rho}_{i+1}\\
\end{array}\right]^T\|_2\neq 0$ hold for $i=1,\cdots,m,$ then the $i$th generalized quaternion Givens rotation defined in \cite{Jia2, Jia3} is an ${(m+1)\times m}$ quaternion matrix having the form
$$\mathbf{G}_i=\left[\begin{array}{ccccc}
	\mathbf{I}_{i-1}&&&\\
	&\mathbf{g}_{11}^{(i)}&\mathbf{g}_{12}^{(i)}&\\
	&\mathbf{g}_{21}^{(i)}&\mathbf{g}_{22}^{(i)}&\\
	&&&\mathbf{I}_{m-i}\\
\end{array}\right]$$
in which
$$
\mathbf{g}_{11}^{(i)}=\frac{\bm{\alpha}_{i}}{\mathbf{t}_{i}},\ \mathbf{g}_{21}^{(i)}=\frac{\bm{\rho}_{i+1}}{\mathbf{t}_{i}},
$$
$$\left\{
 \begin{aligned}
 	\mathbf{g}_{12}^{(i)}&=|\mathbf{g}_{21}^{(i)}|,\  \mathbf{g}_{22}^{(i)}=-\frac{\mathbf{g}_{21}^{(i)}}{|\mathbf{g}_{21}^{(i)}|}(\mathbf{g}_{11}^{(i)})^*,\ \ \mbox{if}\  |\bm{\alpha}_{i}|\leq |\bm{\rho}_{i+1}|,\\
\mathbf{g}_{22}^{(i)}&=|\mathbf{g}_{11}^{(i)}|,\  \mathbf{g}_{12}^{(i)}=-\frac{\mathbf{g}_{11}^{(i)}}{|\mathbf{g}_{11}^{(i)}|}(\mathbf{g}_{21}^{(i)})^*,\ \ \mbox{if}\ |\bm{\alpha}_{i}|>|\bm{\rho}_{i+1}|.\\
 		\end{aligned}
 \right.$$
 Multiply the Hessenberg matrix $\mathbf{H}_{m+1,m}$ and the corresponding right-hand side $\mathbf{g}_m=\bm{\beta}\mathbf{e}_1$ by $\mathbf{G}_1^*,\cdots,\mathbf{G}_m^*$ from the left, i.e., the QR factorization of $\mathbf{H}_{m+1,m}$. Denote the product of $\mathbf{G}_i^*$ as the unitary quaternion $\mathbf{Q}_m $, i.e., $\mathbf{Q}_m=\mathbf{G}_m^*\cdots\mathbf{G}_1^*$. Then the problem \eqref{4-2} can be reduced as an upper tridiagonal quaternion least-squares (TQLS) problem
\begin{equation}\label{4-4}
\mathbf{z}_m=\min_{\mathbf{z}\in\mathbb{Q}^m}\|\mathbf{g}_{m+1}-\mathbf{R}_{m+1,m}\mathbf{z}\|_2,
\end{equation}
where $\mathbf{g}_{m+1}=\left[\begin{array}{ccccc}
	\mathbf{g}_{m}\\
	\bm{\gamma}_{m+1}\\
\end{array}\right]=\mathbf{Q}_m(\bm{\beta}\mathbf{e}_1)$, $\mathbf{R}_{m+1,m}=\left[\begin{array}{ccccc}
	\mathbf{R}_{m}\\
	0\\
\end{array}\right]=\mathbf{Q}_m\mathbf{H}_{m+1,m}$, with
\begin{equation}\label{4-5}\mathbf{R}_{m}=\begin{bmatrix}
\bm{\eta}_{1}^{(1)}&\bm{\eta}_{1}^{(2)}&\bm{\eta}_{1}^{(3)}&&&\\
&\bm{\eta}_{2}^{(1)}&\bm{\eta}_{2}^{(2)}&\bm{\eta}_{2}^{(3)}&&\\
&&\ddots&\ddots&\ddots&\\
&&&\bm{\eta}_{m-2}^{(1)}&\bm{\eta}_{m-2}^{(2)}&\bm{\eta}_{m-2}^{(3)}\\
&&&&\bm{\eta}_{m-1}^{(1)}&\bm{\eta}_{m-1}^{(2)}\\
&&&&&\bm{\eta}_{m}^{(1)}\\
\end{bmatrix},\quad\quad
\mathbf{g}_{m}=\left[\begin{array}{ccccc}
	\bm{\gamma}_1\\
	\vdots\\
 \bm{\gamma}_{m}\\
\end{array}\right],\end{equation}
$\bm{\eta}_{i}^{(1)}$ being positive real numbers, $\bm{\eta}_{j}^{(2)},$ and $ \bm{\eta}_{k}^{(3)}\in\mathbb{Q}$, $i=1,\cdots,m, j=1,\cdots,m-1, k=1,\cdots,m-2$. It seems that the storage and computational work of \eqref{4-3} are much less than \eqref{4-2}. Overall, we can obtain that the unique solution to the TQLS problem is given by
\begin{equation}\label{4-6}
 \mathbf{z}_m:=\mathbf{R}_{m}^{-1}\mathbf{g}_m,
\end{equation}
and the approximate solution is thus of form
\begin{equation}\label{4-7}
\mathbf{x}_m:=\mathbf{x}_0+\mathbf{V}_m\mathbf{R}_{m}^{-1}\mathbf{g}_m.
\end{equation}
Then, its corresponding quasi-residual norm is of the form 
\begin{equation}\label{4-8}
\begin{aligned}
\|\bm{\beta}\mathbf{e}_1-\mathbf{H}_{m+1,m}\mathbf{z}_m\|_2
=|\bm{\gamma}_{m+1}|.
\end{aligned}
\end{equation}

Actually, the solution $\mathbf{x}_m$ can be readily updated from step to step. The details are as follows. Define
$\mathbf{D}^{(m)}=\left[\begin{array}{ccccc}
	\mathbf{d}_1&\mathbf{d}_2&\cdots&\mathbf{d}_m
\end{array}\right]\equiv \mathbf{V}_m\mathbf{R}_{m}^{-1}$, then it follows that
$$\begin{aligned}
\mathbf{d}_1&=\mathbf{v}_1/\bm{\eta}_{1}^{(1)},\\
\mathbf{d}_2&=(\mathbf{v}_2-\mathbf{d}_1\bm{\eta}_{1}^{(2)})/\bm{\eta}_{2}^{(1)},\\
&\cdots\\
\mathbf{d}_i&=(\mathbf{v}_i-\mathbf{d}_{i-1}\bm{\eta}_{i-1}^{(2)}-\mathbf{d}_{i-2}\bm{\eta}_{i-2}^{(3)})/\bm{\eta}_{i}^{(1)},\quad\quad i=3,4,\cdots.
\end{aligned}$$
Then, Eq.\eqref{4-7} can be rewritten as
$$\begin{aligned}
\mathbf{x}_m&=\mathbf{x}_0+\mathbf{V}_m\mathbf{R}_{m}^{-1}\mathbf{g}_m\\
&=\mathbf{x}_0+\mathbf{D}^{(m)}\mathbf{g}_m\\
&=\mathbf{x}_0+\left[\begin{array}{ccccc}
	\mathbf{D}^{(m-1)}&\mathbf{d}_m
\end{array}\right]\left[\begin{array}{ccccc}
	\mathbf{g}_{m-1}\\
 \bm{\gamma}_m
\end{array}\right]\\
&=\mathbf{x}_0+\mathbf{D}^{(m-1)}\mathbf{g}_{m-1}+\mathbf{d}_m \bm{\gamma}_m\\
&=\mathbf{x}_{m-1}+\mathbf{d}_m \bm{\gamma}_m.
\end{aligned}
$$
Utilizing this information, a QQMR method derived from Algorithm \ref{algorithm-2} is as follows.
 \begin{algorithm}[htb]
	\caption{QQMR method based on quaternion three-term recurrences}\label{algorithm-5}
	\begin{algorithmic}[1]
		 \STATE  Given an initial guess $\mathbf{x}_0$ and the tolerance $\varepsilon$. Compute $\mathbf{r}_0=\mathbf{b}-\mathbf{A}\mathbf{x}_0$ and $\bm{\beta}=\bm{\gamma}_1=\|\mathbf{r}_0\|_2$. Set  $\mathbf{w}_1:=\mathbf{v}_1:=\mathbf{r}_0/\bm{\gamma}_1$.
		\STATE For $j=1,2,\cdots,$ until convergence Do:\\
\STATE   $\bm{\alpha}_{j}=\bm{\sigma}_j^{-1}\langle \mathbf{A}\mathbf{v}_j,\mathbf{w}_j\rangle$, $\overline{\bm{\alpha}}_{j}=\bm{\sigma}_j^{-*}\langle \mathbf{A}\mathbf{v}_j,\mathbf{w}_j\rangle^*$  \\
	\STATE $\overline{\mathbf{v}}_{j+1}:=\mathbf{A}\mathbf{v}_j-\mathbf{v}_{j}\bm{\alpha}_j-\mathbf{v}_{j-1}\bm{\tau}_j$\\
 \STATE $\overline{\mathbf{w}}_{j+1}:=\mathbf{A}^*\mathbf{w}_j-\mathbf{w}_{j}\overline{\bm{\alpha}}_{j}-\mathbf{w}_{j-1}(\bm{\rho}_j\bm{\sigma}_{j-1}^{-*}\bm{\sigma}_j^*)$\\
 \STATE If $j>2$ then
$$\left[\begin{array}{ccccc}
	\bm{\eta}_{j-2}^{(3)}\\
	\overline{\bm{\tau}}_{j}\\
\end{array}\right]=\left[\begin{array}{ccccc}
	\mathbf{g}_{11}^{(j-2)}&\mathbf{g}_{12}^{(j-2)}\\
	\mathbf{g}_{21}^{(j-2)}&\mathbf{g}_{22}^{(j-2)}\\
\end{array}\right]^*\left[\begin{array}{ccccc}
	0\\
	\bm{\tau}_{j}\\
\end{array}\right]$$
end if
 \STATE If $j>1$ then
 $$\left[\begin{array}{ccccc}
	\bm{\eta}_{j-1}^{(2)}\\
	\hat{\bm{\alpha}}_{j}\\
\end{array}\right]=\left[\begin{array}{ccccc}
	\mathbf{g}_{11}^{(j-1)}&\mathbf{g}_{12}^{(j-1)}\\
	\mathbf{g}_{21}^{(j-1)}&\mathbf{g}_{22}^{(j-1)}\\
\end{array}\right]^*\left[\begin{array}{ccccc}
	\overline{\bm{\tau}}_{j}\\
	\bm{\alpha}_{j}\\
\end{array}\right],\quad\mbox{where}\ \overline{\bm{\tau}}_{2}={\bm{\tau}}_{2}$$
end if
 \STATE If $|\hat{\bm{\alpha}}_{j}|\leq |\bm{\rho}_{j+1}|$, $(\hat{\bm{\alpha}}_{1}={\bm{\alpha}}_{1})$, then
\STATE $\mathbf{g}_{12}^{(j)}=|\mathbf{g}_{21}^{(j)}|,\  \mathbf{g}_{22}^{(j)}=-\frac{\mathbf{g}_{21}^{(j)}}{|\mathbf{g}_{21}^{(j)}|}(\mathbf{g}_{11}^{(j)})^*$,\ \mbox{where} $\mathbf{g}_{11}^{(j)}=\frac{\hat{\bm{\alpha}}_{j}}{\mathbf{t}_{j}},\ \mathbf{g}_{21}^{(j)}=\frac{\bm{\rho}_{j+1}}{\mathbf{t}_{j}},$ 
 \STATE Else
 \STATE
$\mathbf{g}_{22}^{(j)}=|\mathbf{g}_{11}^{(j)}|,\  \mathbf{g}_{12}^{(j)}=-\frac{\mathbf{g}_{11}^{(j)}}{|\mathbf{g}_{11}^{(j)}|}(\mathbf{g}_{21}^{(j)})^*$,\ \mbox{where} $\mathbf{g}_{11}^{(j)}=\frac{\hat{\bm{\alpha}}_{j}}{\mathbf{t}_{j}},\ \mathbf{g}_{21}^{(j)}=\frac{\bm{\rho}_{j+1}}{\mathbf{t}_{j}},$ 
\STATE end if
 \STATE $\bm{\eta}_{j}^{(1)}=(\mathbf{g}_{11}^{(j)})^*\hat{\bm{\alpha}}_{j}+(\mathbf{g}_{21}^{(j)})^*\bm{\rho}_{j+1}$
 \STATE $$\left[\begin{array}{ccccc}
	{\bm{\gamma}}_{j}\\
	{\bm{\gamma}}_{j+1}\\
\end{array}\right]=\left[\begin{array}{ccccc}
	\mathbf{g}_{11}^{(j)}&\mathbf{g}_{12}^{(j)}\\
	\mathbf{g}_{21}^{(j)}&\mathbf{g}_{22}^{(j)}\\
\end{array}\right]^*\left[\begin{array}{ccccc}
	{\bm{\gamma}}_{j}\\
	0\\
\end{array}\right]$$
 \STATE $\mathbf{d}_j=(\mathbf{v}_j-\mathbf{d}_{j-1}\bm{\eta}_{j-1}^{(2)}-\mathbf{d}_{j-2}\bm{\eta}_{j-2}^{(3)})/\bm{\eta}_{j}^{(1)}$ \mbox{where} $\mathbf{d}_0=\mathbf{d}_{-1}=\mathbf{0}$
 \STATE $\mathbf{x}_{j}=\mathbf{x}_{j-1}+\mathbf{d}_j \bm{\gamma}_j$, $\mathbf{r}_j=\mathbf{r}_{j-1}-\mathbf{A}\mathbf{d}_j\bm{\gamma}_j$
 \STATE If $\|\mathbf{r}_j\|/{\bm{\beta}}\leq \varepsilon$, then Stop	
\STATE $\bm{\rho}_{j+1}=\|\overline{\mathbf{v}}_{j+1}\|$, and $\bm{\varepsilon}_{j+1}=\|\overline{\mathbf{w}}_{j+1}\|$\\
		\STATE   $\mathbf{w}_{j+1}:=\overline{\mathbf{w}}_{j+1}/\bm{\varepsilon}_{j+1}$\\
		\STATE $\mathbf{v}_{j+1}:=\overline{\mathbf{v}}_{j+1}/\bm{\rho}_{j+1}$\\
  \STATE $\bm{\sigma}_{j+1}=\langle\mathbf{v}_{j+1},\mathbf{w}_{j+1}\rangle$
  \STATE $\bm{\tau}_{j+1}=\bm{\varepsilon}_{j+1}\bm{\sigma}_{j}^{-1}\bm{\sigma}_{j+1}$
	\end{algorithmic}
\end{algorithm}

The following proposition presents a result on the actual residual norm of the solution of Algorithm \ref{algorithm-5}.
\begin{proposition}\label{proposition-4-1}
Let $\mathbf{x}_m$ be an approximate solution generated by Algorithm \ref{algorithm-5} at step $m$. Then its residual norm yields the relation
$$\|\mathbf{r}_m\|_2\leq \sqrt{m+1}|(\mathbf{g}_{12}^{(1)})^*(\mathbf{g}_{12}^{(2)})^*\cdots(\mathbf{g}_{12}^{(m)})^*|\|\mathbf{r}_0\|_2.$$
\begin{proof}
Since the $m+1$ columns of $\mathbf{V}_{m+1}$ are unit vectors, this implies $\|\mathbf{V}_{m+1}\|_2\leq \sqrt{m+1}$.
Moreover, by line 14 in Algorithm \ref{algorithm-5}, we obtain
$$|\bm{\gamma}_{m+1}|\leq |(\mathbf{g}_{12}^{(1)})^*(\mathbf{g}_{12}^{(2)})^*\cdots(\mathbf{g}_{12}^{(m)})^*|\|\mathbf{r}_0\|_2.$$
The assertion following Eqs.\eqref{4-2} and \eqref{4-8} immediately yields
$$\begin{aligned}
\|\mathbf{r}_m\|_2&\leq\|\mathbf{V}_{m+1}\|_2|\bm{\gamma}_{m+1}|\\
&\leq \|\mathbf{r}_m\|_2\leq \sqrt{m+1}|(\mathbf{g}_{12}^{(1)})^*(\mathbf{g}_{12}^{(2)})^*\cdots(\mathbf{g}_{12}^{(m)})^*|\|\mathbf{r}_0\|_2.
\end{aligned}$$
\end{proof}
\end{proposition}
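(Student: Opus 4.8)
The plan is to bound $\|\mathbf{r}_m\|_2$ by the product of two factors, the operator norm $\|\mathbf{V}_{m+1}\|_2$ and the scalar quasi-residual $|\bm{\gamma}_{m+1}|$, and then estimate each factor separately. Starting from the residual expression \eqref{4-2} and using that the method is initialized with $\mathbf{v}_1=\mathbf{r}_0/\bm{\beta}$, so that $\mathbf{r}_0=\mathbf{V}_{m+1}(\bm{\beta}\mathbf{e}_1)$, I would rewrite $\mathbf{r}_m=\mathbf{V}_{m+1}(\bm{\beta}\mathbf{e}_1-\mathbf{H}_{m+1,m}\mathbf{z}_m)$. Submultiplicativity of the induced $2$-norm then gives $\|\mathbf{r}_m\|_2\le\|\mathbf{V}_{m+1}\|_2\,\|\bm{\beta}\mathbf{e}_1-\mathbf{H}_{m+1,m}\mathbf{z}_m\|_2$, and by \eqref{4-8} the right-hand factor is exactly $|\bm{\gamma}_{m+1}|$. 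This reduces the proof to the two estimates $\|\mathbf{V}_{m+1}\|_2\le\sqrt{m+1}$ and $|\bm{\gamma}_{m+1}|\le|(\mathbf{g}_{12}^{(1)})^*\cdots(\mathbf{g}_{12}^{(m)})^*|\,\|\mathbf{r}_0\|_2$.

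For the first factor, I would exploit that every column $\mathbf{v}_i$ of $\mathbf{V}_{m+1}$ is a unit vector. For any $\mathbf{y}=(\mathbf{y}_i)\in\mathbb{Q}^{m+1}$, the triangle inequality together with $\|\mathbf{v}_i\mathbf{y}_i\|_2=\|\mathbf{v}_i\|_2|\mathbf{y}_i|=|\mathbf{y}_i|$ gives $\|\mathbf{V}_{m+1}\mathbf{y}\|_2=\|\sum_{i=1}^{m+1}\mathbf{v}_i\mathbf{y}_i\|_2\le\sum_{i=1}^{m+1}|\mathbf{y}_i|$, and Cauchy--Schwarz bounds the last sum by $\sqrt{m+1}\,\|\mathbf{y}\|_2$. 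Taking the supremum over unit $\mathbf{y}$ yields $\|\mathbf{V}_{m+1}\|_2\le\sqrt{m+1}$.

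The second factor is obtained by unrolling the Givens update in line 14 of Algorithm \ref{algorithm-5}. That step left-multiplies $[\bm{\gamma}_j,\,0]^{T}$ by the conjugate transpose of the active $2\times2$ block of $\mathbf{G}_j$, so its second component reads $\bm{\gamma}_{j+1}=(\mathbf{g}_{12}^{(j)})^*\bm{\gamma}_j$, where $\bm{\gamma}_j$ on the right is the value entering step $j$ (i.e.\ the second output produced at step $j-1$, not the overwritten first component). Iterating from $j=m$ down to $j=1$ with $\bm{\gamma}_1=\|\mathbf{r}_0\|_2$ gives $\bm{\gamma}_{m+1}=(\mathbf{g}_{12}^{(m)})^*\cdots(\mathbf{g}_{12}^{(1)})^*\,\|\mathbf{r}_0\|_2$. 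Since the quaternion magnitude is multiplicative, taking $|\cdot|$ yields $|\bm{\gamma}_{m+1}|=\prod_{i=1}^{m}|(\mathbf{g}_{12}^{(i)})^*|\,\|\mathbf{r}_0\|_2$, and as a product of magnitudes is independent of the order of the factors this equals $|(\mathbf{g}_{12}^{(1)})^*\cdots(\mathbf{g}_{12}^{(m)})^*|\,\|\mathbf{r}_0\|_2$ (in fact an equality, slightly stronger than the stated bound).

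Combining the two estimates with the opening factorization produces $\|\mathbf{r}_m\|_2\le\sqrt{m+1}\,|(\mathbf{g}_{12}^{(1)})^*\cdots(\mathbf{g}_{12}^{(m)})^*|\,\|\mathbf{r}_0\|_2$. The only genuinely delicate point is the bookkeeping in the third step: one must track which copy of $\bm{\gamma}_j$ is fed into the rotation so that the telescoping recursion $\bm{\gamma}_{j+1}=(\mathbf{g}_{12}^{(j)})^*\bm{\gamma}_j$ is set up correctly; once that is pinned down, multiplicativity of $|\cdot|$ makes the remainder routine. The operator-norm bound and the reduction through \eqref{4-2} and \eqref{4-8} are straightforward.
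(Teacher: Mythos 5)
Your proof is correct and takes essentially the same route as the paper's: the factorization $\|\mathbf{r}_m\|_2\le\|\mathbf{V}_{m+1}\|_2\,|\bm{\gamma}_{m+1}|$ via \eqref{4-2} and \eqref{4-8}, the bound $\|\mathbf{V}_{m+1}\|_2\le\sqrt{m+1}$ from the unit columns, and the product bound on $|\bm{\gamma}_{m+1}|$ from line 14 of Algorithm \ref{algorithm-5}. You merely fill in details the paper leaves implicit (the triangle-inequality/Cauchy--Schwarz argument for the norm bound and the careful unrolling of the Givens recursion, where you correctly note that $|\bm{\gamma}_{m+1}|$ satisfies an equality by multiplicativity of the quaternion magnitude), which incidentally also avoids the typographical slip in the paper's final chain of inequalities.
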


Next, we derive an upper bound
on the convergence rate for Algorithm \ref{algorithm-5}, which is essentially the same as the bound for QGMRES \cite{Jia4}. Ideally, in the absence of round-off errors, the Algorithm \ref{algorithm-5} will terminate at most $4n$ iterations, just like QBiCG \cite{Li1}. Given a diagonalizable quaternion matrix
$\mathbf{M}$, the condition number of a quaternion matrix $\mathbf{X}$, consisted of eigenvectors of $\mathbf{M}$, is defined as
$$
\bm{\kappa}(\mathbf{M})=\min_{\mathbf{X}:\mathbf{X}^{-1}\mathbf{M}\mathbf{X}\ \mbox{diagonal}} \|\mathbf{X}^{-1}\|_2\|\mathbf{X}\|_2.
$$ With this information, the convergence theorem of Algorithm \ref{algorithm-5} can be formulated as follows.
\begin{theorem}\label{theorem-5}
Assume that the tridiagonal quaternion matrix $\mathbf{H}_{m}\in\mathbb{Q}^{m\times m}$ generated by Algorithm \ref{algorithm-5} is diagonalizable, and let $\mathbf{H}_m:=\mathbf{X}\mathbf{S}\mathbf{X}^{-1}$ with $\mathbf{S}=diag(\bm{\lambda}_1,\cdots,\bm{\lambda}_m)$ whose diagonal elements are eigenvalues of $\mathbf{H}_m$. Then, for $k=1, \cdots, m-1$, the residual vectors of QQMR method yield
\begin{equation}\label{4-9}
\|\mathbf{r}_k\|_2\leq \|\mathbf{r}_0\|_2\sqrt{k+1}\bm{\kappa}(\mathbf{H}_m)\bm{\zeta}^{(k)},
\end{equation}
where $\bm{\zeta}^{(k)}=\min_{\bm{\alpha}_1,\cdots,\bm{\alpha}_{j}\in\mathbb{Q}, j\leq k}(\max_{i=1,\cdots,m}(1+|\bm{\lambda}_i||\bm{\alpha}_1|+\cdots+|\bm{\lambda}_i^j||\bm{\alpha}_j|)).$
Moreover, if the Algorithm \ref{algorithm-5} breaks down at step $m$, i.e., $\bm{\rho}_{m+1}=0$, then $\mathbf{x}_m=\mathbf{A}^{-1}\mathbf{b}$ is the exact solution of the original linear systems \eqref{1-1}.
\begin{proof}
Let \begin{equation}\label{4-10}
\bm{\chi}_k=\min_{\mathbf{z}\in \mathbb{Q}^{k}}\|\mathbf{e}_1^{(k+1)}-\mathbf{H}_{k+1,k}\mathbf{z}\|_2,\quad\quad  \mathbf{e}_1^{(k+1)}=[0,\cdots,0,1]^T\in\mathbb{R}^{k+1},
\end{equation}
and let
$k\in\{1,2,\cdots,m-1\}$ be arbitrary, but fixed. By
$$\mathbf{H}_m=\left[\begin{array}{ccccc}
	\mathbf{H}_k&*\\
0&*
\end{array}\right],$$ it follows that
$$\mathbf{H}_m\left[\begin{array}{ccccc}
	\mathbf{z}\\
0
\end{array}\right]=\left[\begin{array}{ccccc}
	\mathbf{H}_k\mathbf{z}\\
0
\end{array}\right]$$
for all $\mathbf{z}\in\mathbb{Q}^{k}$. Recall that the quaternion matrix $\mathbf{H}_m$ is tridiagonal with nonnegative subdiagonal elements and Eq.\eqref{2-2}, we have
$$
\left\{\left[\begin{array}{ccccc}
	\mathbf{z}\\
0
\end{array}\right]|\mathbf{z}\in\mathbb{Q}^k\right\}=\{\mathcal{L}_{k-1}(\mathbf{H}_m,\mathbf{e}_1^{(m)})\}.
$$
Then Eq.\eqref{4-10} can be rewritten as
$$\bm{\chi}_k=\min_{\mathbf{z}\in \mathbb{Q}^{k}}\|\mathbf{e}_1^{(m)}-\mathbf{H}_{m}\left[\begin{array}{ccccc}
	\mathbf{z}\\
0
\end{array}\right]\|_2=\min\|\mathcal{L}_{k}(\mathbf{H}_m,\mathbf{e}_1^{(m)})\|_2,$$
where $\mathcal{L}_{k}(\mathbf{H}_m,\mathbf{e}_1^{(m)})$ satisfy the constraint $\mathcal{L}_{k}(\mathbf{0},\mathbf{e}_1^{(m)})=\mathbf{e}_1^{(m)} (\bm{\alpha}_0=1)$. By expanding $\mathbf{e}_1^{(m)} $ into any set of eigenvectors of $\mathbf{H}_m$, the above relation leads to
$$\begin{aligned}
\bm{\chi}_k&=\min\|\mathcal{L}_{k}(\mathbf{H}_m,\mathbf{e}_1^{(m)})\|_2\\
&\leq \min\|\mathbf{I}+|\mathbf{H}_m||\bm{\alpha}_1|+\cdots+|\mathbf{H}_m^k||\bm{\alpha}_k|\|_2\|\mathbf{e}_1^{(m)}\|_2\\
&\leq \|\mathbf{X}\|_2 \|\mathbf{X}^{-1}\|_2(\min\|\mathbf{I}+|\mathbf{S}||\bm{\alpha}_1|+\cdots+|\mathbf{S}^k||\bm{\alpha}_k|\|_2)\\
&\leq \bm{\kappa}(\mathbf{H}_m)(\min_{\bm{\alpha}_1,\cdots,\bm{\alpha}_{j}\in\mathbb{Q}, j\leq k}(\max_{i=1,\cdots,m}(1+|\bm{\lambda}_i||\bm{\alpha}_1|+\cdots+|\bm{\lambda}_i^j||\bm{\alpha}_j|))).
\end{aligned}$$
This together with Proposition \ref{proposition-4-1} give the following relation
$$
\|\mathbf{r}_k\|_2\leq\sqrt{k+1}\|\mathbf{r}_k\|_2\bm{\chi}_k
\leq\|\mathbf{r}_0\|_2\sqrt{k+1}\bm{\zeta}^{(k)},
$$
which completes the first half of the proof. When $k=m$, the quaternion least-square problem \eqref{4-3} reduces to a quaternion linear system with the coefficient matrix $\mathbf{H}_m$. Since $\mathbf{A}$ is nonsingular, then $\mathbf{H}_m$ so is, and thus it can be solved exactly. The proof follows immediately.
\end{proof}
\end{theorem}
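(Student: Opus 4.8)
The plan is to split the argument cleanly into two independent parts: the quasi-minimal residual bound \eqref{4-9} for indices $k\le m-1$, and the exact-termination claim in the breakdown case $\bm{\rho}_{m+1}=0$. The first part rests on transferring a bound from the quasi-residual to the true residual and then estimating a scalar quantity by eigenvalue data; the second is a short invariance argument.

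For the convergence bound, I would first reduce the actual residual to the quasi-residual. Proposition \ref{proposition-4-1} together with \eqref{4-2} and \eqref{4-8} already yields $\|\mathbf{r}_k\|_2\le\sqrt{k+1}\,|\bm{\gamma}_{k+1}|$, and $|\bm{\gamma}_{k+1}|$ is exactly the minimum of the quasi-residual norm $\|\bm{\beta}\mathbf{e}_1-\mathbf{H}_{k+1,k}\mathbf{z}\|_2$ over $\mathbf{z}\in\mathbb{Q}^k$. Since $\bm{\beta}=\|\mathbf{r}_0\|_2$ is a real scalar, factoring it out of the minimization isolates the scale-free quantity $\bm{\chi}_k$ of \eqref{4-10}, so that $\|\mathbf{r}_k\|_2\le\sqrt{k+1}\,\|\mathbf{r}_0\|_2\,\bm{\chi}_k$, and the whole task becomes bounding $\bm{\chi}_k$ by $\bm{\kappa}(\mathbf{H}_m)\,\bm{\zeta}^{(k)}$. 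The key step is to recognize $\bm{\chi}_k$ as a minimization over a right-Krylov combination: because $\mathbf{H}_m$ is tridiagonal with strictly positive subdiagonal entries (Theorem \ref{theorem-4}), the vectors $\mathbf{e}_1^{(m)},\mathbf{H}_m\mathbf{e}_1^{(m)},\ldots,\mathbf{H}_m^{k}\mathbf{e}_1^{(m)}$ fill out the first $k+1$ coordinate directions, so the feasible set $\{[\mathbf{z};\mathbf{0}]\}$ coincides with $\mathcal{L}_k(\mathbf{H}_m,\mathbf{e}_1^{(m)})$ under the normalization $\bm{\alpha}_0=1$. This rewrites $\bm{\chi}_k=\min\|\mathbf{e}_1^{(m)}+\mathbf{H}_m\mathbf{e}_1^{(m)}\bm{\alpha}_1+\cdots+\mathbf{H}_m^{k}\mathbf{e}_1^{(m)}\bm{\alpha}_k\|_2$. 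Applying the triangle inequality, using $\|\mathbf{e}_1^{(m)}\|_2=1$, and pulling out the scalar moduli $|\bm{\alpha}_j|$ gives the majorization $\bm{\chi}_k\le\min_{\bm{\alpha}}\|\mathbf{I}+|\mathbf{H}_m||\bm{\alpha}_1|+\cdots+|\mathbf{H}_m^{k}||\bm{\alpha}_k|\|_2$. Inserting $\mathbf{H}_m=\mathbf{X}\mathbf{S}\mathbf{X}^{-1}$ and extracting $\|\mathbf{X}\|_2\|\mathbf{X}^{-1}\|_2=\bm{\kappa}(\mathbf{H}_m)$, the diagonal structure of $\mathbf{S}$ collapses the matrix norm to $\max_i(1+|\bm{\lambda}_i||\bm{\alpha}_1|+\cdots+|\bm{\lambda}_i^{j}||\bm{\alpha}_j|)$, which is precisely $\bm{\zeta}^{(k)}$; here I would invoke the multiplicativity $|\bm{\lambda}_i^{j}|=|\bm{\lambda}_i|^{j}$ of the quaternion modulus. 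Chaining the inequalities delivers \eqref{4-9}.

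For the breakdown case, I would observe that $\bm{\rho}_{m+1}=\|\overline{\mathbf{v}}_{m+1}\|=0$ forces $\overline{\mathbf{v}}_{m+1}=\mathbf{0}$ in Algorithm \ref{algorithm-5}, so the coupling row $\bm{\rho}_{m+1}\mathbf{e}_m^*$ of $\mathbf{H}_{m+1,m}$ vanishes and \eqref{eq:bb} collapses to $\mathbf{A}\mathbf{V}_m=\mathbf{V}_m\mathbf{H}_m$. The subspace $\mathcal{K}_m(\mathbf{A},\mathbf{v}_1)$ is then $\mathbf{A}$-invariant and the quasi-residual problem \eqref{4-3} reduces to the square system $\mathbf{H}_m\mathbf{z}=\bm{\beta}\mathbf{e}_1$. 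Since $\mathbf{A}$ is nonsingular and $\mathbf{V}_m$ has full column rank (its columns form a basis of $\mathcal{K}_m(\mathbf{A},\mathbf{v}_1)$), the identity $\mathbf{A}\mathbf{V}_m=\mathbf{V}_m\mathbf{H}_m$ forces $\mathbf{H}_m$ to be nonsingular; the system is therefore solvable exactly, giving $\mathbf{r}_m=\mathbf{0}$ and hence $\mathbf{x}_m=\mathbf{A}^{-1}\mathbf{b}$.

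The main obstacle I anticipate is the non-commutative polynomial step. Unlike the complex QMR analysis, I cannot write the residual as $p(\mathbf{A})\mathbf{r}_0$ for a scalar polynomial $p$ with $p(0)=1$, so the reduction must proceed through the right-linear combinations $\mathcal{L}_k$, and the passage to moduli must be justified termwise \emph{before} diagonalizing. I would also be careful that the diagonalizing transformation $\mathbf{X}$ and the eigenvalues $\bm{\lambda}_i$ of the quaternion matrix $\mathbf{H}_m$ interact with the quaternion $2$-norm exactly as in the complex setting; for this I would lean on the stated diagonalizability hypothesis together with modulus multiplicativity to keep the final $\max_i$ bound meaningful.
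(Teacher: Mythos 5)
Your proposal follows essentially the same route as the paper's proof: the same reduction of the true residual to the scale-free quantity $\bm{\chi}_k$ via Proposition \ref{proposition-4-1}, the same identification of $\bm{\chi}_k$ with a minimization over right-linear Krylov combinations $\mathcal{L}_k(\mathbf{H}_m,\mathbf{e}_1^{(m)})$ with $\bm{\alpha}_0=1$ (exploiting the tridiagonal structure with nonzero subdiagonal), the same termwise passage to moduli followed by diagonalization $\mathbf{H}_m=\mathbf{X}\mathbf{S}\mathbf{X}^{-1}$ to reach $\bm{\kappa}(\mathbf{H}_m)\bm{\zeta}^{(k)}$, and the same reduction of the breakdown case to a square nonsingular system. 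Your treatment of the breakdown case is in fact slightly more careful than the paper's (you justify the nonsingularity of $\mathbf{H}_m$ via $\mathbf{A}\mathbf{V}_m=\mathbf{V}_m\mathbf{H}_m$ and the full column rank of $\mathbf{V}_m$, where the paper merely asserts it), and your chaining correctly uses $\|\mathbf{r}_0\|_2$ where the paper's displayed inequality contains a typographical $\|\mathbf{r}_k\|_2$.
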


In what follows, we derive a stable variant of the QQMR method based on the quaternion coupled two-term recurrence, i.e., Algorithm \ref{algorithm-3}, for solving the quaternion linear system \eqref{1-1}. Recall that Algorithm \ref{algorithm-3}, Eqs.\eqref{3-5}, \eqref{eq:aa1}  and \eqref{4-2}, we observe that the approximate solution $\mathbf{x}_m$ of \eqref{1-1} can be reformulated as follows
\begin{equation}\label{4-11}
    \mathbf{x}_m=\mathbf{x}_0+\mathbf{P}_{m}\mathbf{y}_m,
\end{equation}
where $\mathbf{y}_m=\mathbf{U}_{m}\mathbf{z}_m$ is the unique solution of the following lower bidiagonal quaternion least-square problem
\begin{equation}\label{4-12}
    \|\bm{\beta}\mathbf{e}_1-\mathbf{L}_{m+1,m}\mathbf{y}_m\|_2=\min_{\mathbf{y}\in\mathbb{Q}^m}\|\bm{\beta}\mathbf{e}_1-\mathbf{L}_{m+1,m}\mathbf{y}\|_2.
\end{equation}
Again, by a sequence of generalized quaternion Givens rotations, the QR factorization of $\mathbf{L}_{m+1,m}$ is of the form
$\mathbf{Q}_m\mathbf{L}_{m+1,m}=\left[\begin{array}{ccccc}
	\overline{\mathbf{R}}_{m}\\
	0\\
\end{array}\right]$, with
$$\overline{\mathbf{R}}_{m}=\begin{bmatrix}
\bm{\kappa}_{1}^{(1)}&\bm{\kappa}_{1}^{(2)}&0&&\\
&\bm{\kappa}_{2}^{(1)}&\bm{\kappa}_{2}^{(2)}&0&\\
&&\ddots&\ddots&0\\
&&&\bm{\kappa}_{m-1}^{(1)}&\bm{\kappa}_{m-1}^{(2)}\\
&&&&\bm{\kappa}_{m}^{(1)}\\
\end{bmatrix},$$
and  $\overline{\mathbf{g}}_{m+1}=\left[\begin{array}{ccccc}
	\overline{\mathbf{g}}_{m}\\
	\overline{\bm{\gamma}}_{m+1}\\
\end{array}\right]=\mathbf{Q}_m(\bm{\beta}\mathbf{e}_1)$. Then, similar to Algorithm \ref{algorithm-5}, the solution $\mathbf{y}_m$ is given by $ \mathbf{y}_m:=\overline{\mathbf{R}}_{m}^{-1}\overline{\mathbf{g}}_m.$ Define
$\overline{\mathbf{D}}^{(m)}=\left[\begin{array}{ccccc}
\overline{\mathbf{d}}_1&\overline{\mathbf{d}}_2&\cdots&\overline{\mathbf{d}}_m
\end{array}\right]\equiv \mathbf{P}_m\overline{\mathbf{R}}_{m}^{-1}$, then we can update $\mathbf{x}_m$ from step to step in a similar way. The details are as follows. By
$$\begin{aligned}
\overline{\mathbf{d}}_1&=\mathbf{p}_1/\bm{\kappa}_{1}^{(1)},\\
&\cdots\\
\overline{\mathbf{d}}_i&=(\mathbf{p}_i-\overline{\mathbf{d}}_{i-1}\bm{\kappa}_{i-1}^{(2)})/\bm{\kappa}_{i}^{(1)}, \quad\quad i=2,3,\cdots.
\end{aligned}$$

With these results, a robust version of the QQMR method based on Algorithm \ref{algorithm-3} is described below, see Algorithm \ref{algorithm-6}.

\begin{algorithm}[htb]
	\caption{QQMR method based on quaternion coupled two-term recurrences}\label{algorithm-6}
	\begin{algorithmic}[1]
		 \STATE  Given an initial guess $\mathbf{x}_0$ and the tolerance $\varepsilon$. Compute $\mathbf{r}_0=\mathbf{b}-\mathbf{A}\mathbf{x}_0$ and $\bm{\beta}=\bm{\gamma}_1=\|\mathbf{r}_0\|_2$. Set  $\mathbf{w}_1:=\mathbf{v}_1:=\mathbf{r}_0/\bm{\gamma}_1$, $\mathbf{p}_0=\mathbf{q}_0=\mathbf{0}\in\mathbb{Q}^n$ and $\bm{l}_0=0$.\\
		\STATE For $j=1,2,\cdots,$ until convergence Do:\\
\STATE $\mathbf{p}_j:=\mathbf{v}_j-\mathbf{p}_{j-1}\bm{\varepsilon}_{j}\bm{l}_{j-1}^{-1}\bm{\sigma}_{j}$
\STATE $\mathbf{q}_j:=\mathbf{w}_j-\mathbf{q}_{j-1}\bm{\rho}_j\bm{l}_{j-1}^{-*}\bm{\sigma}_{j}^*$
\STATE $\bm{l}_j=\langle\mathbf{A}\mathbf{p}_j,\mathbf{q}_j\rangle$
\STATE $\overline{\mathbf{v}}_{j+1}:=\mathbf{A}\mathbf{p}_j-\mathbf{v}_{j}\bm{\sigma}_{j}^{-1}\bm{l}_j$\\
		\STATE $\overline{\mathbf{w}}_{j+1}:=\mathbf{A}^*\mathbf{q}_j-\mathbf{w}_{j}\bm{\sigma}_{j}^{-*}\bm{l}_j^*$\\
 \STATE Update the QR factorization of $\mathbf{L}_{j+1,j}$, i.e., Generate $\overline{\mathbf{R}}_{j}$ and $\overline{\mathbf{g}}_{j}$ by applying a sequence of $\mathbf{G}_i^*$.
 \STATE $\overline{\mathbf{d}}_j=(\mathbf{p}_j-\overline{\mathbf{d}}_{j-1}\bm{\kappa}_{j-1}^{(2)})/\bm{\kappa}_{j}^{(1)} \mbox{where}\ \overline{\mathbf{d}}_0=\mathbf{0}$
 \STATE $\mathbf{x}_{j}=\mathbf{x}_{j-1}+\overline{\mathbf{d}}_j\overline{\bm{\gamma}}_j$, $\mathbf{r}_j=\mathbf{r}_{j-1}-\mathbf{A}\overline{\mathbf{d}}_j\overline{\bm{\gamma}}_j$
 \STATE If $\|\mathbf{r}_j\|/{\bm{\beta}}\leq \varepsilon$, then Stop	
\STATE $\bm{\rho}_{j+1}=\|\overline{\mathbf{v}}_{j+1}\|$, and $\bm{\varepsilon}_{j+1}=\|\overline{\mathbf{w}}_{j+1}\|$\\
		\STATE   $\mathbf{w}_{j+1}:=\overline{\mathbf{w}}_{j+1}/\bm{\varepsilon}_{j+1}$\\
		\STATE $\mathbf{v}_{j+1}:=\overline{\mathbf{v}}_{j+1}/\bm{\rho}_{j+1}$\\
	\STATE
 $\bm{\sigma}_{j+1}=\langle\mathbf{v}_{j+1},\mathbf{w}_{j+1}\rangle$
\end{algorithmic}
\end{algorithm}
Note that Algorithm \ref{algorithm-5} and Algorithm \ref{algorithm-6} are equivalent, which means that all convergence properties for the former also hold for the latter algorithm.

It should be pointed out that preconditioning is a crucial ingredient for the success of quaternion Krylov subspace methods in practical applications. In general, the reliability and validity of the above methods, dealing with various applications, depends more on the quality of the preconditioner. Herein, we take the nonsingular quaternion matrix $\mathbf{M}=\mathbf{M}_1\mathbf{M}_2\in\mathbb{Q}^{n\times n}$ as a preconditioner, which approximates in some sense the coefficient matrix $\mathbf{A}$ of Eq.\eqref{1-1}. Then we apply the QQMR Algorithms \ref{algorithm-5} and \ref{algorithm-6} to solve the preconditioned quaternion linear system
$$\mathbf{A}'\mathbf{x}'=\mathbf{b}'$$
which has the same solution with Eq.\eqref{1-1} and better spectral properties, where $\mathbf{A}'=\mathbf{M}_1^{-1}\mathbf{A}\mathbf{M}_2^{-1}$, $\mathbf{x}'=\mathbf{M}_2\mathbf{x}$ and $\mathbf{b}'=\mathbf{M}_1^{-1}\mathbf{b}$. Hence, the preconditioned QQMR methods, including preconditioned Algorithms \ref{algorithm-5} and \ref{algorithm-6}, are sketched as follows.

\begin{algorithm}[htb]
	\caption{Preconditioned QQMR method based on quaternion three-term recurrences}\label{algorithm-7}
	\begin{algorithmic}[1]
		 \STATE  Given an initial guess $\mathbf{x}_0$ and the tolerance $\varepsilon$. Compute $\mathbf{r}_0^{'} =\mathbf{M}_1^{-1}(\mathbf{b}-\mathbf{A}\mathbf{x}_0)$ and $\bm{\beta}=\bm{\gamma}_1=\|\mathbf{r}_0^{'}\|_2$. Set  $\mathbf{w}_1:=\mathbf{v}_1:=\mathbf{r}_0^{'}/\bm{\gamma}_1$.
		\STATE For $j=1,2,\cdots,$ until convergence Do:\\
\STATE  Perform the $j$-th iteration of Algorithm \ref{algorithm-5} (applied to $\mathbf{A}'$). The generated quaternion matrices $\mathbf{V}_j, \mathbf{V}_{j+1}$ and $\mathbf{H}_{j+1,j}$ yield
$\mathbf{A}'\mathbf{V}_j=\mathbf{V}_{j+1}\mathbf{H}_{j+1,j}$.
 \STATE $\mathbf{x}_{j}=\mathbf{x}_{j-1}+\mathbf{d}_j \bm{\gamma}_j$, $\mathbf{r}_j=\mathbf{r}_{j-1}-\mathbf{A}^{'}\mathbf{d}_j\bm{\gamma}_j$
 \STATE If $\|\mathbf{r}_j\|/{\bm{\beta}}\leq \varepsilon$, then Stop	
	\end{algorithmic}
\end{algorithm}
\begin{algorithm}[htb]
	\caption{Preconditioned QQMR method based on quaternion coupled two-term recurrences}\label{algorithm-8}
	\begin{algorithmic}[1]
		 \STATE  Given an initial guess $\mathbf{x}_0$ and the tolerance $\varepsilon$. Compute $\mathbf{r}_0^{'} =\mathbf{M}_1^{-1}(\mathbf{b}-\mathbf{A}\mathbf{x}_0)$ and $\bm{\beta}=\bm{\gamma}_1=\|\mathbf{r}_0^{'}\|_2$. Set  $\mathbf{w}_1:=\mathbf{v}_1:=\mathbf{r}_0^{'}/\bm{\gamma}_1$, $\mathbf{p}_0=\mathbf{q}_0=\mathbf{0}\in\mathbb{Q}^n$ and $\bm{l}_0=0$.
		\STATE For $j=1,2,\cdots,$ until convergence Do:\\
\STATE  Perform the $j$-th iteration of Algorithm \ref{algorithm-6} (applied to $\mathbf{A}'$). The generated quaternion matrices $\mathbf{P}_j, \mathbf{V}_{j+1}$ and $\mathbf{L}_{j+1,j}$ yield
$\mathbf{A}^{'}\mathbf{P}_j=\mathbf{V}_{j+1}\mathbf{L}_{j+1,j}$
\STATE $\mathbf{x}_{j}=\mathbf{x}_{j-1}+\overline{\mathbf{d}}_j\overline{\bm{\gamma}}_j$, $\mathbf{r}_j=\mathbf{r}_{j-1}-\mathbf{A}^{'}\overline{\mathbf{d}}_j\overline{\bm{\gamma}}_j$
 \STATE If $\|\mathbf{r}_j\|/{\bm{\beta}}\leq \varepsilon$, then Stop		
	\end{algorithmic}
\end{algorithm}

\section{Numerical Experiments}\label{section-5}
In this section, we present some numerical experiments on model problems, including Chen's chaotic attractor, and color image deblurred problems, and demonstrate the robustness and effectiveness of the proposed algorithms compared with QBiCG when applied to solve the quaternion linear systems \eqref{1-1}.  All experiments have been carried out in Matlab 2022a on a personal computer with Inter(R) Core(TM) i7-12700KF @3.6GHz and 16.00 GB memory. Denote by ``IT'', the number of iterations, by ``CPU'', the elapsed computing time in seconds. Unless otherwise stated, the stopping criteria of all tested methods are either the $m$-th iteration relative residual satisfying
$$
\mbox{RR}:=\|\mathbf{b}-\mathbf{A}\mathbf{x}_m\|_2/\|\mathbf{r}_0\|_2\leq 10^{-7},
$$
or the maximum number of iterations reaching 5000. In all tests, we take the initial guess $\mathbf{x}_0$ as the zero vector, and $\mathbf{M}=\mathbf{M}_1=(\mathbf{D}+\mathbf{L})\mathbf{D}^{-1}(\mathbf{D}+\mathbf{U})$ as the left preconditioner, i.e., the SSOR preconditioner, which is based on a decomposition of the coefficient matrix $\mathbf{A}$ into a nonsingular diagonal matrix $\mathbf{D}$, a strictly lower triangular matrix $\mathbf{L}$, and a strictly upper
triangular matrix $\mathbf{U}$, such that $\mathbf{A}=\mathbf{D}+\mathbf{L}+\mathbf{U}$.

 \begin{example}\label{ex-1}
{\rm In this example, we consider solving the original quaternion linear systems \eqref{1-1} with its parameters given by
\begin{equation}\label{5-1}
\begin{aligned}
\mathbf{A}&=A_0+A_1\mathbf{i}+A_2\mathbf{j}+A_3\mathbf{k},\\
\mathbf{b}&=b_0+b_1\mathbf{i}+b_2\mathbf{j}+b_3\mathbf{k},
\end{aligned}
\end{equation}
where $b_i=\mbox{rand}(n,1)$, $i=1,2,3,4$, is an $n$-dimensional random vector with entries uniformly distributed in $[0,1]$, $A_1=2*A_0$, $A_2=-1.5*A_0$, $A_3=0.5*A_0$, and $A_0$ from the University of Florida Sparse Matrix Collection \cite{Davis1} are chosen as $A_0=\textbf{mcca}$, $\textbf{cavity01}\ $, $\textbf{rdb968}$ and $\textbf{pde2961}$, respectively.}
\end{example}

{\rm We first implemented five iterative solvers, including QBiCG, Algorithms \ref{algorithm-5}, \ref{algorithm-6} and their preconditioned variants, for solving the cases $\textbf{mcca}$, $\textbf{cavity01}$, $\textbf{rdb968}$ and $\textbf{pde2961}$, and then recorded the numerical results in Table \ref{tab-1}. It turns out that the iterations and computing time in seconds required by Algorithms \ref{algorithm-6}, \ref{algorithm-7}, and \ref{algorithm-8} are commonly less than that of QBiCG and Algorithm \ref{algorithm-5}, where Algorithms \ref{algorithm-7} and \ref{algorithm-8} outperform the other methods, with they having similar performance. Moreover, we observe that the convergence rate of the two preconditioned variants is at least five times that of the other solvers.
Convergence histories of all tested solvers for the cases $\bf{mcca}$ and $\bf{pde2961}$ were depicted in Fig. \ref{fig:1}. As shown in this figure, the convergence curve of QBiCG seems considerably irregular, whereas all proposed algorithms often present much smoother convergence behaviors than the QBiCG solver. Furthermore, both Algorithms \ref{algorithm-7} and \ref{algorithm-8} yield rather monotone convergence behaviors compared
to the other methods. Additionally, although the QBiCG solver performs better than Algorithm \ref{algorithm-5} for most cases regarding the iterations and computing time, the latter still converges much smoother than the former solver.}
\begin{table}[htbp]
	\footnotesize
	\caption{Numerical comparison results of Example \ref{ex-1}. \label{tab-1}}
	\begin{center}
		\begin{tabular}{|c|c|c|c|c|c|c|} \hline
			Data set& Application discipline &$n$ & Algorithms & IT& CPU& RR \\ \hline	
&&&QBiCG&{1194}&1.25& 1.91e-06\\
   &&&Algorithm \ref{algorithm-5}&1432&1.76&   1.83e-06\\
   \bf{mcca}   &\scriptsize{2D/3D Problem} &$180$&Algorithm \ref{algorithm-6} &889& 0.78&  1.47e-06 \\
         &&&Algorithm \ref{algorithm-7} &14&0.03&   5.61e-07\\
            &&&Algorithm \ref{algorithm-8} &14&0.03&  5.60e-07\\ \hline
			&&&QBiCG&{892}&2.13& 7.62e-07\\
   &&&Algorithm \ref{algorithm-5}&938&  2.52&  9.95e-07\\
   \bf{bfwa398}   & \scriptsize{Electromagnetics Problem}&$398$&Algorithm \ref{algorithm-6} &875& 2.05&9.75e-07 \\
            &&&Algorithm \ref{algorithm-7} &53&0.28& 9.44e-07\\ 
              &&&Algorithm \ref{algorithm-8} &51&0.24&   9.08e-07\\ \hline
&&&QBiCG&{3829}&48.11&9.05e-07\\
   &&&Algorithm \ref{algorithm-5}&{4731}&80.21& 9.97e-07 \\
     \bf{rdb968} &\scriptsize{Computational Fluid Dynamics}&$968$&Algorithm \ref{algorithm-6} &3765&44.47&9.87e-07 \\
         &&&Algorithm \ref{algorithm-7} &121&7.80& 5.25e-07\\
            &&&Algorithm \ref{algorithm-8} &117&7.32& 4.54e-07\\ \hline     
  &&&QBiCG&2687&142.55& 9.95e-07\\
   &&&Algorithm \ref{algorithm-5}&{3121}& 157.87&  9.98e-07 \\
      \bf{pde2961}&\scriptsize{2D/3D Problem}&$2961$&Algorithm \ref{algorithm-6} &2551&139.35&  9.66e-07\\
           &&&Algorithm \ref{algorithm-7} &190&16.64&  9.74e-07\\ 
            &&&Algorithm \ref{algorithm-8} &187& 16.38& 9.51e-07\\ \hline
		\end{tabular}
	\end{center}
\end{table}
\begin{figure}[htbp]
  \centering
  \begin{minipage}[c]{1.0\textwidth}
			\includegraphics[width=2.49in]{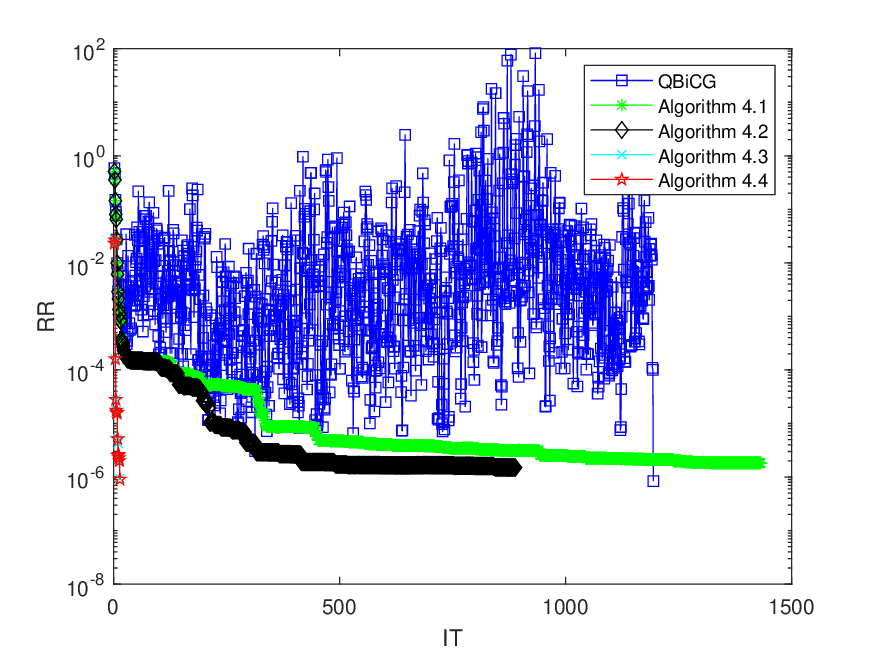}\quad 
   \includegraphics[width=2.49in]{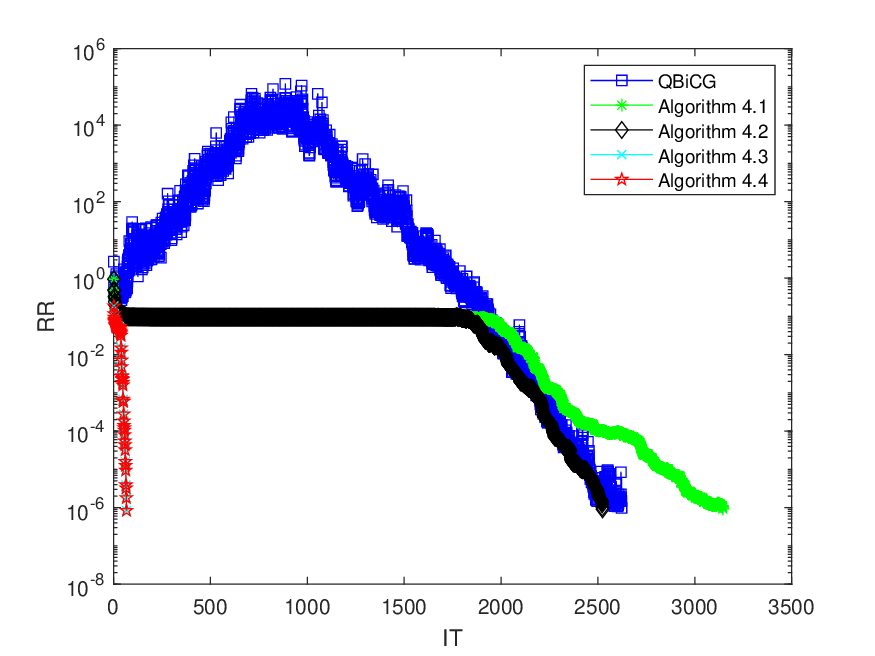}
		\end{minipage}
  \caption{
  Convergence histories for Example \ref{ex-1} with $\bf{mcca}$ (left) and $\bf{pde2961}$ (right).}
  \label{fig:1}
\end{figure}

It is well-known that various signals of engineering problems are always collected by sensors in three dimensions. Herein, we denote a three-dimensional signal by a pure quaternion function of time $\mathbf{x}(t)={x}_r(t)\mathbf{i}+{x}_g(t)\mathbf{j}+{x}_b(t)\mathbf{k}$, in which ${x}_r(t),{x}_g(t)$, and ${x}_b(t)$ represent real functions, such as red, green and blue channels, respectively. Observe from \cite{Jia4} that, the collected signals $\mathbf{x}(t-p),\cdots,\mathbf{x}(t)$, regarded as input signals, often act on the desired quaternion filters $\mathbf{w}(0),\cdots,\mathbf{w}(p)$ with $\mathbf{w}(s)=\mathbf{w}(s)_0+{w}_r(s)\mathbf{i}+{x}_g(s)\mathbf{j}+{x}_b(s)\mathbf{k}$, $s=0,\cdots,p$, such that the filtered output can match the target signal $\mathbf{y}(t)$, i.e.,
\begin{equation}\label{5-2}
\mathbf{y}(t)=\sum_{s=0}^{p}\mathbf{x}(t-s)\mathbf{w}(s).
\end{equation}
If we now take $q$ signal data along the time series, this model can be reformulated as the following quaternion linear systems
\begin{equation}\label{5-3}
\mathbf{X}*\mathbf{w}=\mathbf{y},
\end{equation}
where $$\begin{aligned}
\mathbf{X}&=\begin{bmatrix}
\mathbf{x}(t)&\mathbf{x}(t-1)&\mathbf{x}(t-2)&\cdots&\mathbf{x}(t-p)\\
\mathbf{x}(t+1)&\mathbf{x}(t)&\mathbf{x}(t-1)&\cdots&\mathbf{x}(t-p+1)\\
\vdots&\vdots&\ddots&\vdots&\vdots\\
\mathbf{x}(t+q)&\mathbf{x}(t+q-1)&\mathbf{x}(t+q-2)&\cdots&\mathbf{x}(t+q-p)\\
\end{bmatrix},\\
\mathbf{w}&=[\mathbf{w}(0) \quad \mathbf{w}(1) \quad \mathbf{w}(2)\quad \cdots \quad\mathbf{w}(p)]^T,\\
\mathbf{y}&=[\mathbf{y}(t)\quad \mathbf{y}(t+1) \quad \mathbf{y}(t+2) \quad\cdots \quad\mathbf{y}(t+q)]^T.\\
\end{aligned}$$
Let $\mathbf{X}=X_0+X_1\mathbf{i}+X_2\mathbf{j}+X_3\mathbf{k}\in \mathbb{Q}^{n\times n},$ $\mathbf{y}=y_0+y_1\mathbf{i}+y_2\mathbf{j}+y_3\mathbf{k}\in \mathbb{Q}^{n}$ and $\mathbf{w}=w_0+w_1\mathbf{i}+w_2\mathbf{j}+w_3\mathbf{k}\in \mathbb{Q}^{n}$. Then, we can utilize the proposed algorithms and QBiCG for solving the quaternion linear systems \eqref{5-3}.

\begin{example}\label{ex-2}
{\rm Consider Chen's chaotic attractor \cite{Chen00, Chen11}, i.e., three-dimensional nonlinear system, applied in dynamical systems and chaos control and synchronization. The attractor is quite similar to those of the Lorenz system used in atmospheric convection \cite{Sparrow}, but they are topologically not equivalent. The Chen chaotic system, via a simple state feedback to the second equation in the Lorenz system, is formulated as the following coupled differential equations
\begin{equation}\label{5-4}
\frac{\partial x}{\partial t}=\alpha(y-x),\quad \frac{\partial y}{\partial t}=(\rho-\alpha)x-xz+\beta y,\quad \frac{\partial z}{\partial t}=xy-\beta z,
\end{equation}
where $\alpha,\beta$ and $\rho$ are positive. Here the parameters are chosen as $\alpha=35,\beta=3$ and $\rho=28$, and then solve the above equations \eqref{4-2} by the MATLAB order $\text{ODE45}(f(t, [x, y, z]),$ $ [0, T],[1,1,1])$ with different $T>0$. Let the solutions $y_r(t),y_g(t)$ and $y_b(t)$ of Eqs.\eqref{5-2} be the target signal 
$$
\mathbf{y}(t)=y_r(t)\mathbf{i}+y_g(t)\mathbf{j}+y_b(t)\mathbf{k},
$$
and let 
$$\mathbf{x}(t)=y_r(t-1)\mathbf{i}+y_g(t-1)\mathbf{j}+y_b(t-1)\mathbf{k}+\mathbf{n}(t)$$
be the input signal with $\mathbf{n}(t)$ being a random noise. This leads to a quaternion linear system built as Eq.\eqref{5-3}. }

{\rm In Table \ref{tab-2}, for Algorithms \ref{algorithm-5}-\ref{algorithm-8} and QBiCG applied to solve Eq.\eqref{5-3}, we listed the obtained numerical results for different dimensions. It appears from the results that, as the size increases, the iterations and the elapsed CPU time corresponding to Algorithms \ref{algorithm-5}, \ref{algorithm-6}, and the QBiCG solver rapidly increased, while Algorithms \ref{algorithm-7} and \ref{algorithm-8} gradually
increased. These comparison results demonstrate the superiority of Algorithms \ref{algorithm-7} and \ref{algorithm-8}, and Algorithm \ref{algorithm-6} based on coupled two-term recurrences is still slightly better than QBiCG in terms of iterations and the computing time.
Besides, we displayed the convergence histories of all algorithms for the case $n=101$ in Fig. \ref{fig:2}. One can see that the four proposed algorithms work more smoothly than QBiCG, whereas the convergence behavior of the latter is still less robust. On the other hand, we can also see that, both Algorithms \ref{algorithm-7} and \ref{algorithm-8} have almost the same convergence behaviors, but the the former is more costly.
}
\end{example}

\begin{table}[htbp]
	\footnotesize
	\caption{Numerical comparison results of Example \ref{ex-2} \label{tab-2}}
	\begin{center}
		\begin{tabular}{|c|c|c|c|c|} \hline
			\bf Dimension &\bf Algorithms & \bf CPU&\bf IT &\bf RR \\ \hline
   & QBiCG &0.10&150& 5.47e-07\\
   &Algorithm \ref{algorithm-5}&0.14&170& 6.57e-07\\
    49&Algorithm \ref{algorithm-6}&0.09&145&4.04e-07\\
	&Algorithm \ref{algorithm-7}&0.03&90&9.68e-07\\
    &Algorithm \ref{algorithm-8}&0.02&87&  7.52e-07\\\hline
			& QBiCG &0.85&827& 5.66e-07\\
   &Algorithm \ref{algorithm-5}& 1.46&1173& 9.98e-07\\
   101 &Algorithm \ref{algorithm-6}& 0.74&753& 8.72e-07\\
	&Algorithm \ref{algorithm-7}&0.22&378&9.68e-07\\
    &Algorithm \ref{algorithm-8}&0.17&344&8.84e-07\\		\hline		
    & QBiCG &4.55&1705&  9.93e-07 \\
   &Algorithm \ref{algorithm-5}&7.16&2146&  9.98e-07\\
    203&Algorithm \ref{algorithm-6}&3.41&1453& 9.94e-07\\
	&Algorithm \ref{algorithm-7}&1.36&989&9.96e-07\\
    &Algorithm \ref{algorithm-8}& 0.73&746&9.95e-07\\		\hline
    & QBiCG &23.51&3266& 8.82e-07 \\
   &Algorithm \ref{algorithm-5}&45.29&4013&9.91e-07\\
   303 &Algorithm \ref{algorithm-6}&19.32&2824& 5.08e-07\\
	&Algorithm \ref{algorithm-7}& 2.43&1382&9.98e-07\\
    &Algorithm \ref{algorithm-8}& 1.74&1096& 9.96e-07\\		\hline
		\end{tabular}
	\end{center}
\end{table}

\begin{figure}[htbp]
  \centering
  \begin{minipage}[c]{0.6\textwidth}
			\includegraphics[width=3in]{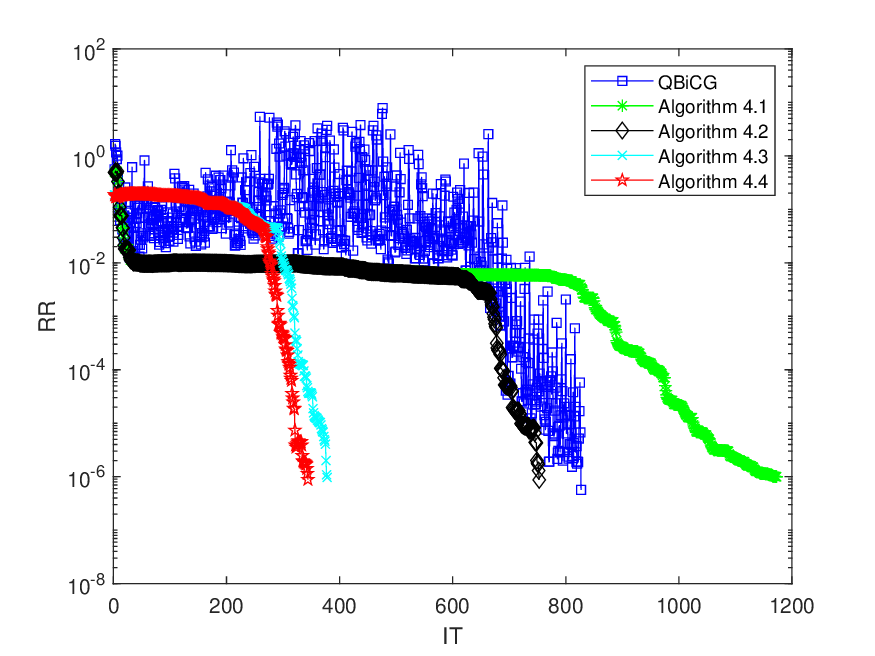}
		\end{minipage}
  \caption{Convergence histories of Example \ref{ex-2} with $n=101$.}
  \label{fig:2}
\end{figure}

In what follows, we will further test the effectiveness when the proposed algorithms are applied to solve the original linear systems \eqref{1-1} arising from the color image deblurred problems, and then compare them numerically with QBiCG.
\begin{example}\label{ex-3}
{\rm Let the pure quaternion matrix $\hat{\mathbf{X}}=\hat{{X}}_1\mathbf{i}+\hat{X}_2\mathbf{j}+\hat{X}_3\mathbf{k}\in \mathbb{Q}^{n\times n}$ be a color image, in which $\hat{X}_1, \hat{X}_2, \hat{X}_3\in \mathbb{R}^{n\times n}$ represent the red, green and blue channels, respectively. It consists of $n\times n$ color pixel values in the range [0, d] with $d = 255$ being the maximum possible pixel value of the image. Denote by $\hat{\mathbf{x}}=\mbox{vec}(\hat{\mathbf{X}})\in \mathbb{Q}^{n^2}$,  a pure quaternion vector generated by the stretching operator $\mbox{vec}(\cdot)$ that stacks the columns of the original color image $\hat{\mathbf{X}}$. If there exists a blurring matrix $\mathbf{A}$ on the color image, such that the filtered output is the observed color image $\mathbf{b}\in\mathbb{Q}^{n^2}$, then this model can be expressed mathematically as the quaternion linear systems \eqref{1-1}.

 The performance of all the tested algorithms is evaluated by four measures the computing time ($\mathbf{CPU}$), the peak signal-to-noise ratio ($\mathbf{PSNR}$) in decibel ($\mathbf{dB}$), structural similarity ($\mathbf{SSIM}$) \cite{Wang0101}, relative residual ($\mathbf{RR}$). Define
$$\begin{aligned}
\mathbf{PSNR}(\mathbf{X})&=10\log_{10}(\frac{3mnd^2}{\|\hat{\mathbf{x}}-\mathbf{x}\|_2^2}),\quad \mathbf{RR}(\mathbf{X})=\frac{\|\hat{\mathbf{x}}-\mathbf{x}\|_2}{\|\hat{\mathbf{x}}\|_2},  \\
\mathbf{SSIM}(\mathbf{X})&=\frac{(2\mu_{\hat{\mathbf{x}}}\mu_{\mathbf{x}_k}+c_1)(2\sigma_{\hat{\mathbf{x}}\mathbf{x}_k}+c_2)}{(\mu_{\hat{\mathbf{x}}}^2+\mu_{\mathbf{x}_k}^2+c_1)(\sigma_{\hat{\mathbf{x}}}^2+\sigma_{\mathbf{x}_k}^2+c_2)},
\end{aligned}
$$
where ${\mathbf{x}}=\mbox{vec}({\mathbf{X}})$ is an $n^2$-dimensional quaternion vector with ${\mathbf{X}}$ being the restored color image, $\mu_{(\hat{\mathbf{x}})}$, $\mu_{(\mathbf{x}_k)}$ and $\sigma_{(\hat{\mathbf{x}})}$, $\sigma_{(\mathbf{x}_k)}$  are the means and variances of  $\hat{\mathbf{x}}$ and $\mathbf{x}_k$, respectively, $\sigma_{\hat{\mathbf{x}}\mathbf{x}_k}$ is the covariance between $\hat{\mathbf{x}}$ and $\mathbf{x}_k$, $c_1=(0.01L)^2, c_2=(0.03L)^2$ with $L$ being the dynamic range of pixel values. Herein, we set the stopping criterion as the maximum iterations achieving 200. Since the arithmetic operations of the inverse of the left preconditioner are expensive, we only compare Algorithms \ref{algorithm-5} and \ref{algorithm-6} with QBiCG \cite{Li1} in this example. 

\begin{table}[htbp]
	\footnotesize
	\caption{Numerical results of Example \ref{ex-3} with single channel deblurring. \label{tab-3}}
 \renewcommand\arraystretch{1}
	\begin{center}
		\begin{tabular}{|c|c|c|c|c|c|} \hline
			\bf Case &\bf Algorithm &\bf PSNR & \bf SSIM &\bf CPU&\bf RR \\  \hline
   &QBiCG&29.87& 0.93&226.52&2.11e-02\\
   \textbf{Mudan}& Algorithm\ref{algorithm-5}& 28.62& 0.91& 273.21&2.92e-02\\
& Algorithm\ref{algorithm-6}&30.11&0.94&204.63&1.61e-02\\ 
 \hline
 &QBiCG&30.96& 0.95&134.68&1.85e-02\\
   \textbf{Sunset}& Algorithm\ref{algorithm-5}& 29.59& 0.93& 159.27&2.76e-02\\
& Algorithm\ref{algorithm-6}&31.35&0.96&121.92&1.46e-02\\ 
 \hline
 &QBiCG&34.53& 0.96&260.43&1.63e-02\\
   \textbf{Desk}& Algorithm\ref{algorithm-5}&  33.87& 0.95&315.84 &2.02e-02\\
& Algorithm\ref{algorithm-6}&35.14&0.97&231.47&1.42e-02\\ 
 \hline
		\end{tabular}
	\end{center}
\end{table}

Consider two kinds of deblurred color image problems. One is to restore the color images blurred by a single channel, e.g., the blurring matrix is set to be $\mathbf{A}={A}_0\in \mathbb{R}^{n^2\times n^2}$. As shown in \cite{Bouhamidi12,Li1212}, we set ${A}_0=B^{(1)}\otimes B^{(2)}$ as the blurring matrix, in which $B^{(1)}=(b_{ij}^{(1)})_{1\leq i,j\leq n}$ 
 and $B^{(2)}=(b_{ij}^{(2)})_{1\leq i,j\leq n}$ are the Toeplitz matrices with their elements given by
 \begin{equation}\label{5-5}
b_{ij}^{(1)}=\left\{\begin{aligned}	
&\frac{1}{\sigma\sqrt{2\pi}}\exp(-\frac{(i-j)^2}{2\sigma^2}),\ \ i-j\leq r,\\
&\frac{1}{\pi r^2},\ \ j-i< r,\\
&0,\ \ \ \  \mbox{otherwise},
\end{aligned}
\right.\ \ \ \ \ b_{ij}^{(2)}=\left\{\begin{aligned}	
	&\frac{1}{2s-1},\ \ |i-j|\leq s,\\
	&0,\ \ \ \  \mbox{otherwise},
\end{aligned}
\right.
\end{equation}
where $\sigma=1$, $r=10$, and $s=7$. 

We applied Algorithms \ref{algorithm-5}, \ref{algorithm-6} and QBiCG to deal with three color images blurred by a single channel blurring matrix $\mathbf{A}={A}_0$ in \eqref{5-5}, including ``\textbf{Mudan}'', ``\textbf{Sunset}'' and ``\textbf{Desk}'' of size $100\times 100$ in JPG format. All of them only have three channels, i.e., red, green, and blue channels. In Table \ref{tab-3}, we listed the computing time, PSNR, and SSIM of the recovered
images and the relative error when the three algorithms reached 300 iterations. We see from this table that the PSNR, SSIM, and computing times required by QBiCG are slightly lower than Algorithm \ref{algorithm-5}, but more than Algorithm \ref{algorithm-6}. This means that the deblurring quality corresponding to Algorithm \ref{algorithm-6} outperforms other solvers. The significant performance
advantage of Algorithm \ref{algorithm-6} is due to its numerical stability, such that it takes less time to achieve the required accuracy. Moreover, we depicted the three deblurred color images for all compared methods in Fig.\ref{fig:3}.

To further show the effectiveness of our algorithms, we can recover three color images, ``\textbf{House}'', ``\textbf{Rose}'' and ``\textbf{Sloth}'' of size $100\times 100$ in PNG format. It is worth noticing that these color images containing four channels, i.e., transparency, red, green and blue channels, can be characterized by a quaternion matrix $\hat{\mathbf{X}}=\hat{{X}}_0+\hat{{X}}_1\mathbf{i}+\hat{X}_2\mathbf{j}+\hat{X}_3\mathbf{k}\in \mathbb{Q}^{n\times n}$. Assume now that the three color images are blurred by a multichannel blurring matrix $\mathbf{A}$, e.g., $\mathbf{A}=A_0+A_1\mathbf{i}+A_2\mathbf{j}+A_3\mathbf{k}\in \mathbb{Q}^{n^2\times n^2}$, where ${A}_0, A_1, A_2, A_3$ are given by
$$
{A}_0=B^{(2)}\otimes B^{(2)},\quad {A}_1={A}_0,\quad {A}_2=-{A}_0,\quad {A}_3=-{A}_0,
$$
and $s=3$.

\begin{table}[htbp]
	\footnotesize
	\caption{Numerical results of Example \ref{ex-3} with multichannel deblurring. \label{tab-4}}
 \renewcommand\arraystretch{1}
	\begin{center}
		\begin{tabular}{|c|c|c|c|c|c|} \hline
			\bf Case &\bf Algorithm &\bf PSNR & \bf SSIM &\bf CPU&\bf RR \\  \hline  
 &QBiCG&26.89& 0.82&732.09 &2.45e-02\\
   \textbf{House}& Algorithm\ref{algorithm-5}& 26.23& 0.81& 913.19&2.79e-02\\
& Algorithm\ref{algorithm-6}&27.15&0.83&665.78&2.20e-02\\ 
 \hline
  &QBiCG&27.39& 0.85&926.54&2.60e-02\\
   \textbf{Rose}& Algorithm\ref{algorithm-5}& 27.02& 0.84&1178.67&2.81e-02\\
& Algorithm\ref{algorithm-6}&27.95&0.86&803.45&2.43e-02\\ 
 \hline
  &QBiCG&  28.01& 0.87&897.19&2.51e-02\\
   \textbf{Sloth}& Algorithm\ref{algorithm-5}&27.82& 0.86&1034.16&2.82e-02\\
& Algorithm\ref{algorithm-6}&28.54&0.98&811.25&2.31e-02\\ 
 \hline
		\end{tabular}
	\end{center}
\end{table}

The results of the numerical comparison for QBiCG, Algorithms \ref{algorithm-5} and \ref{algorithm-6} were reported in Table \ref{tab-4} and Fig.\ref{fig:4}. As seen in Fig.\ref{fig:4}, all tested algorithms for restoring the deblurred color images are feasible, in which Algorithm \ref{algorithm-6} seems to recover with higher quality than other solvers. Moreover, from Table \ref{tab-4}, we obtain that the PSNR, SSIM, and the consuming time to Algorithm \ref{algorithm-6} are still less than Algorithm \ref{algorithm-5}, even though they are 
mathematically equivalent, but the former is more robustness. The main reason is that Algorithm \ref{algorithm-5}, based on the three-term recurrences, is often sensitive to the rounding error.}
\end{example}

\begin{figure}[!htbp]
	\hspace{-15em}
	\begin{center}
		\begin{minipage}[c]{1.0\textwidth}
			\includegraphics[width=5in]{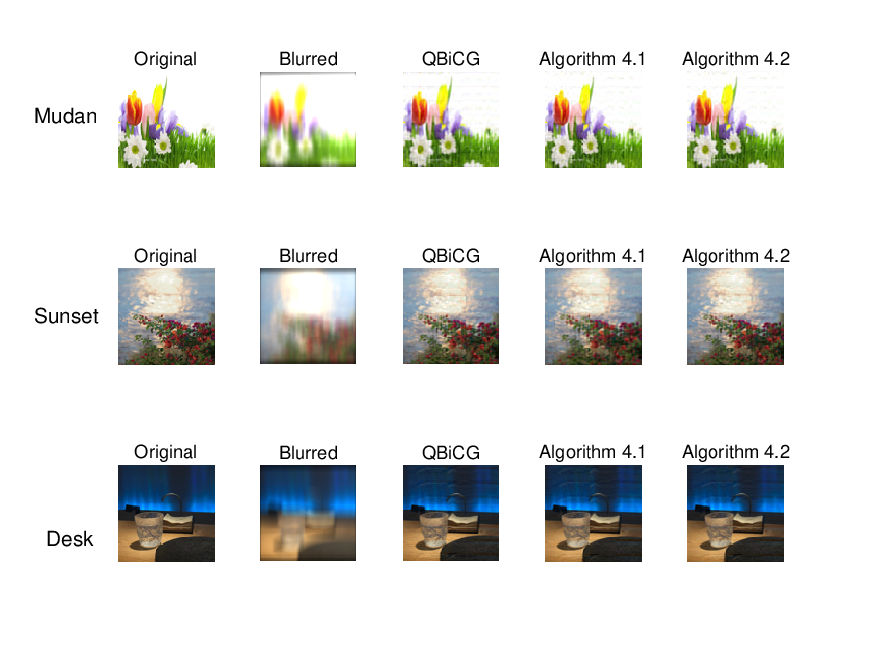}
		\end{minipage}
	\end{center}
	\vspace{-3em}\caption{Left to Right: the original $100\times 100$ images, the blurred images with
 single channel, the deblurred images were restored by QBiCG, Algorithms \ref{algorithm-5} and \ref{algorithm-6}, respectively.\label{fig:3}}
\end{figure}

\begin{figure}[!htbp]
	\hspace{-15em}
	\begin{center}
		\begin{minipage}[c]{1.0\textwidth}
			\includegraphics[width=5in]{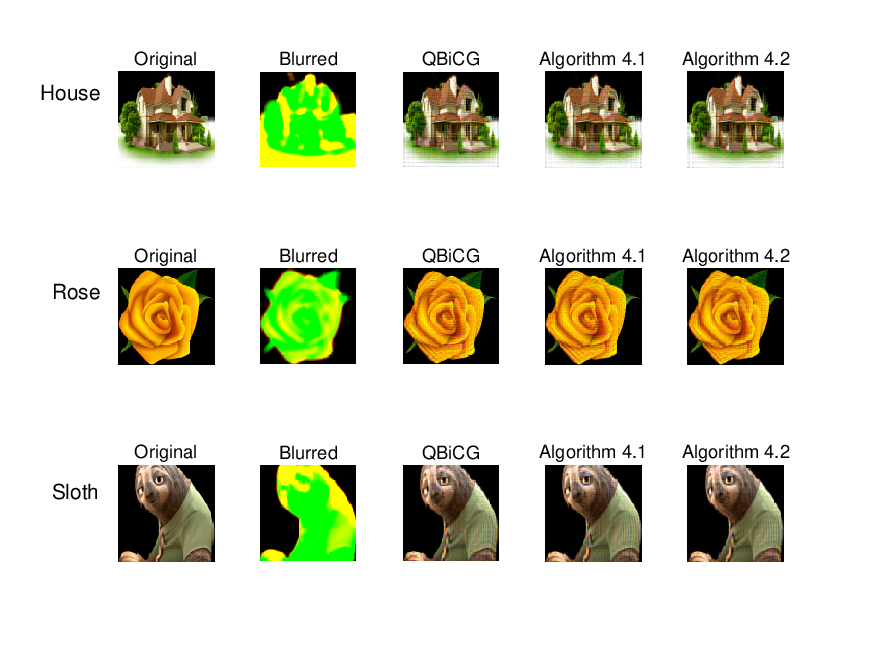}
		\end{minipage}
	\end{center}
	\vspace{-3em}\caption{Left to Right: the original $100\times 100$ images, the blurred images with
 multichannel, the deblurred images were restored by QBiCG, Algorithms \ref{algorithm-5} and \ref{algorithm-6}, respectively.\label{fig:4}}
\end{figure}

\section{Conclusions}
\label{section-6}
In this paper, we derived four structure-preserving quaternion Krylov algorithms, including the QQMR method based on three-term recursions, the QQMR method based on two-term recursions and their preconditioned variants, for solving the quaternion linear systems \eqref{1-1}. The main contributions are the following:
\begin{itemize}
\item Two quaternion biconjugate orthonormalization (QBIO) processes are newly proposed for obtaining a pair of biorthonormal bases to quaternion Krylov subspaces $\mathcal{K}_m(\mathbf{A},\mathbf{v}_1)$ and $\mathcal{K}_m(\mathbf{A}^*,\mathbf{w}_1)$. Both inherit the algebraic symmetry of the real counterparts, saving about three-quarters of theoretical costs compared with the same processes over $\mathbb{R}$.
\item  QQMR method based on three-term recurrences and its robustness variant, as well as their preconditioned variants, are first developed for solving the quaternion linear systems. The convergence analysis of which is also newly established.
\item The proposed algorithms for solving the three-dimensional signal filtering and color image deblurring problems are more effective compared with QBiCG.
\end{itemize}



\end{document}